\theoremstyle{plain}
\newcounter{theorem}
\newtheorem{proposition}[theorem]{Proposition}
\newtheorem{lemma}[theorem]{Lemma}
\newtheorem{corol}[theorem]{Corollary}
\theoremstyle{definition}
\newcounter{remark}
\newtheorem{rmk}[remark]{Remark}
\newtheorem{rmks}[remark]{Remarks}
\newcommand{\field}[1]{\mathbb{#1}}
\newcommand{\RR}{\field{R}}
\def\Ad{\mathop{\rm Ad}\nolimits}
\def\ad{\mathop{\rm ad}\nolimits}
\def\orth{\mathop{\rm orth}\nolimits}
\def\vect#1{\overrightarrow{#1}}
\def\Isom{\mathop{\rm Isom}\nolimits}
\def\SO{\mathop{\rm SO}\nolimits}
\def\vert{\mathop{\rm vert}\nolimits}
\def\leaves{\mathop{\rm Leaves}\nolimits}
\def\d{\mathop{\rm d}\nolimits}
\begin{document}

\thispagestyle{plain}

\title{On Henri Poincar\'e's note \\
       \lq\lq Sur une forme nouvelle des \'equations de la M\'ecanique\rq\rq}

\author{Charles-Michel Marle}

\date{}

\maketitle
\tableofcontents

\begin{abstract}
We present in modern language the contents of the famous note published by Henri Poincar\'e in 1901 
\lq\lq Sur une forme nouvelle des \'equations de la M\'ecanique\rq\rq, in which he proves that, 
when a Lie algebra acts locally transitively on the configuration space of a Lagrangian mechanical system, 
the well known Euler-Lagrange equations are equivalent to a new system of differential equations
defined on the product of the configuration space with the Lie algebra. We write these equations, called the
\emph{Euler-Poincar\'e equations}, under an intrinsic form, without any reference to a particular system of 
local coordinates, and prove that they can be conveniently expressed in terms of the 
Legendre and momentum maps. We discuss the use of the Euler-Poincar\'e equation for reduction 
(a procedure sometimes called \emph{Lagrangian reduction} by modern authors), 
and compare this procedure with the well known Hamiltonian reduction procedure 
(formulated in modern terms in 1974 by J.E.~Marsden and A.~Weinstein). We explain how a break of symmetry in the phase space produces the appearance of a semi-direct product of groups.
\end{abstract}

\label{first}

{\it In memory of Jean-Marie Souriau, founder of the modern theory of Geometrical Mechanics, with respect and admiration}

\section{Introduction}\label{intro}
On the 19th of February 1901, Henri Poincar\'e published a short note~\cite{Poinca} entitled \lq\lq Sur une forme nouvelle des \'equations de la M\'ecanique\rq\rq\ in which he considers a Lagrangian mechanical system with a configuration space on which a Lie algebra acts locally transitively (it means that there exists on the configuration space a Lie algebra of vector fields such that, at each point, the values of these vector fields completely fill the tangent space). Poincar\'e proves that the equations of motion can be written as differential equations living on the product of the configuration space with the Lie algebra, rather than on the tangent bundle to the configuration space. Of course, these equations are equivalent to the well known Euler-Lagrange equations, as was shown by Poincar\'e himself in his note, but they are written in terms of different variables. More recently, several scientists working in the field called \lq\lq Geometric Mechanics\rq\rq, used the equations obtained by Poincar\'e (which they called \lq\lq Euler-Poincar\'e equations\rq\rq) to solve various problems. Following a remark made by Poincar\'e at the end of his note, several authors observed that these equations become very simple when the Lagrangian $L$ is such that its value $L(v)$ at a vector $v$ tangent to the configuration space at a point $x$, only depends on the element of the Lie algebra of vector fields which, at the point $x$, takes the value $v$. Modern authors sometimes call \lq\lq Lagrangian reduction\rq\rq~\cite{bloch, castrillon, cendra, cendraholm, cendramarsden, holm, 
holmmarsden, holm2, holm3, ratiu} the use of that property to make easier the determination of motions of the system.
\par\smallskip

Assumptions made in these recent papers and books seem to us very often more restrictive than those made by 
Poincar\'e himself; for example, several modern authors assume that the mechanical system under study has a 
Lie group as configuration space, and that its dynamics is described by a Lagrangian invariant under the 
lift to the tangent bundle of the action of this group on itself by translations either on the right or 
on the left. It seemed to us useful to go back to the original source, Poincar\'e's
note, to see whether some ideas of Poincar\'e were not overlooked by modern authors.
\par\smallskip

The contents of Poincar\'e's note are described in modern language in Section~\ref{PoincaNote}. 
The equation\footnote{%
We will write \lq\lq Equation\rq\rq\ rather than \lq\lq Equations\rq\rq\ for a differential equation, considered as a single mathematical object even when it lives on a manifold whose dimension may be larger than $1$, and therefore may appear, when written in local coordinates, as a system of several equations.} 
derived in this note, which will be called  the \emph{Euler-Poincar\'e equation}, is written both in local coordinates, as was done by Poincar\'e, and under an intrinsic geometric form with no reference to any particular system of coordinates. In Section~\ref{Examples}, following a remark made by Poincar\'e at the end of his note, we show that Euler-Lagrange equations and the Euler equations for the motion of a rigid body with a fixed point can be considered as special cases of the Euler-Poincar\'e equation. In
Section~\ref{LegMom} we prove that the Euler-Poincar\'e equation can be expressed in terms of the Legendre map and 
the momentum map of the lift to the cotangent bundle of the Lie algebra action on the configuration space. At the end of this Section we also discuss a simple example (the spherical pendulum) in which the dimension of the Lie algebra of vector fields is strictly larger that the dimension of the configuration space of the system. 
The procedure sometimes called \emph{Lagrangian reduction} by modern authors, which amounts, 
when the Lagrangian possesses some symmetry properties, to use the Euler-Poincar\'e equation in order to solve 
successively two differential equations defined on smaller dimensional spaces instead of a single differential equation on a higher dimensional space, is discussed in Section~\ref{EPR}. At the end of this Section, a simple example is used to show that when the dimension of the Lie algebra is strictly larger than the dimension of the configuration space, very serious obstructions limit the applicability of Lagrangian reduction. In Section~\ref{Ham} and in all that follows, the Lagrangian is assumed to be hyperregular and we discuss the Euler-Poincar\'e equation in Hamiltonian formalism. The reduction 
procedure in Hamiltonian formalism is more fully discussed in Section~\ref{EPRH}. 
In Section~\ref{confspaceliegroup} we assume that the configuration space of the system is a Lie group, and we fully discuss the Lagrangian reduction procedure and its relations with the better known Marsden-Weinstein reduction procedure. Finally in Section \ref{SymBreak} we explain how a break of symmetry in the cotangent bundle to the configuration space can lead to the appearance of an extended action of a semi-direct product of groups.

\section{Poincar\'e's Note}\label{PoincaNote}
\subsection{Derivation of the Euler-Poincar\'e equation.}
Poincar\'e considers a Lagrangian mechanical system whose configuration space is a smooth manifold $Q$. The Lagrangian
is a smooth real valued function $L$ defined on the tangent bundle $TQ$. To each parametrized continuous, piecewise smooth
curve $\gamma:[t_0,t_1]\to Q$, defined on a closed interval $[t_0,t_1]$, with values in $Q$, one associates the value at $\gamma$ of the action integral $I_L$
 $$I_L(\gamma)=\int_{t_0}^{t_1}L\left(\frac{\\d\gamma(t)}{\d t}\right)\,\d t\,.$$
The equation of motion of the Lagrangian system is obtained by writing that the pa\-ra\-me\-tri\-zed curve $\gamma$ is an extremal of 
$I$, for variations of $\gamma$ with fixed endpoints.
\par\smallskip

 Poincar\'e assumes that a finite dimensional Lie algebra $\frak g$ acts on the configuration manifold $Q$. In other words, he assumes that there exists a smooth Lie algebras homomorphism $\psi$ of $\frak g$ into the Lie algebra $A^1(Q)$ of smooth vector fields on $Q$. More exactly, Poincar\'e's considerations being local, he only assumes that for each point $a$ of $Q$, 
there exists a neighbourhood $U_a$ of that point and a smooth Lie algebras homomorphism $\psi_a$ of the Lie algebra $\frak g$ into the Lie algebra $A^1(U_a)$ of smooth vector fields on $U_a$. For simplicity we will assume that the Lie algebras homomorphism $\psi$ takes its value into the Lie algebra $A^1(Q)$ of smooth vector fields everywhere defined on $Q$. Since the purpose of Poincar\'e's note is to obtain local expressions of the equation of motion, the more general case when the homomorphisms $\psi_a$ take their values into the space of vector fields on open subsets $U_a$ of $Q$ is easily treated by replacing $Q$ by $U_a$. 
\par\smallskip

For each $X\in\frak g$, we will say that $\psi(X)$ is the \emph{fundamental vector field on $Q$ associated to $X$}. In order to shorten the notations, we will write
$X_Q$ for $\psi(X)$. 
\par\smallskip

Poincar\'e assumes that $\psi$ is locally transitive, \emph{i.e.}, that for each $x\in Q$, the set of values 
taken by $X_Q(x)$, for all $X\in{\frak g}$, is the whole tangent space $T_xQ$. In other words, Poincar\'e assumes that the vector bundles homomorphism, defined on the trivial vector bundle $Q\times{\frak g}$, with values in the tangent bundle $TQ$,
  $$\varphi(x,X)=X_Q(x)\,,
  \quad\hbox{with}\quad x\in Q\,,\ X\in{\frak g}\,,$$
is surjective. We will see that its transpose 
$\varphi^T:T^*Q\to Q\times {\frak g}^*$, which is an injective vector bundles homomorphism 
of the cotangent bundle $T^*Q$ into the trivial bundle $Q\times{\frak g}^*$,
where ${\frak g}^*$ is the dual space of the vector space $\frak g$, is closely related 
to the momentum map defined by J.-M. Souriau~\cite{souriau}.
\par\smallskip 

Poincar\'e's assumptions are satisfied, for example, when there exists a locally transitive action $\Phi:G\times Q\to Q$ 
on the manifold $Q$ of a Lie group $G$ whose Lie algebra is ${\frak g}$.
\par\smallskip

For a given parametrized continuous, piecewise smooth curve 
$\gamma:[t_0,t_1]\to Q$, any pa\-ra\-me\-tri\-zed 
piecewise continuous and 
smooth curve $\overline\gamma:[t_0,t_1]\to Q\times {\frak g}$ 
which, for each $t\in[t_0,t_1]$ at which $\gamma$ is smooth, satifies 
 $$\varphi\bigl(\overline\gamma(t)\bigr)
 =\frac{\\d\gamma(t)}{\d t}\eqno(1)$$
will be said to be a \emph{lift of $\gamma$ to $Q\times{\frak g}$}.
\par\smallskip

Let
  $$p_Q:Q\times{\frak g}\to Q\quad\hbox{and}\quad
    p_{\frak g}:Q\times{\frak g}\to{\frak g}
  $$
be the canonical projections of the product $Q\times{\frak g}$ onto its two factors. Obviously $(1)$ implies
  $$p_Q\circ \overline\gamma=\gamma\,.
  $$
Therefore any lift $\overline\gamma$ to $Q\times{\frak g}$ of a continuous, piecewise smooth parametrized curve 
$\gamma:[t_0,t_1]\to{\frak g}$ can be written
 $$\overline\gamma=(\gamma, V)\,,$$
where $V=p_{\frak g}\circ\overline\gamma:[t_0,t_1]\to{\frak g}$ 
is a piecewise continuous and smooth parametrized curve which satifies, 
for each $t\in[t_0,t_1]$
at which $\gamma$ is smooth,
 $$\frac{\d\gamma(t)}{\d t}=\bigl(V(t)\bigr)_Q\bigl(\gamma(t)\bigr)\,.\eqno(2)
 $$
Any parametrized continuous, piecewise differentiable curve
$\gamma:[t_0,t_1]\to Q$ always has a lift to $Q\times{\frak g}$.
But such a lift may not be unique. Let us set indeed
  $$r=\dim{\frak g}\,,\quad n=\dim Q\,,$$
and for each $x\in Q$, let
 $${\frak g}_x=\bigl\{X\in{\frak g}\, ; X_Q(x)=0\bigr\}$$
be the isotropy Lie algebra of $x$. When 
$r>n$, $\dim{\frak g}_x=r-n>0$. If 
a curve $\overline\gamma=(\gamma,V)$
is a lift of $\gamma$, any other curve 
$\overline\gamma\,'=(\gamma,V')$ such that $(V'-V)(t)\in{\frak g}_{\gamma(t)}$ for each  
$t\in[t_0,t_1]$ at which $\gamma$ is smooth is another lift of $\gamma$.
\par\smallskip

Conversely, a piecewise continuous and smooth
curve $\overline\gamma:[t_0,t_1]\to Q\times{\frak g}$ is a lift to
$Q\times{\frak g}$ of a parametrized continuous, piecewise smooth
curve $\gamma:[t_0,t_1]\to Q$ if and only if its first component $p_Q\circ\overline\gamma$ is almost everywhere equal to 
$\gamma$ and its second component $V=p_{\frak g}\circ\overline\gamma$ satisfies condition $(2)$ above.
A piecewise continuous and smooth curve $\overline\gamma=(\gamma,V):[t_0,t_1]\to Q\times{\frak g}$ 
whose first component $\gamma$ is continuous and whose second component $V$ satisfies condition $(2)$ above will be said to be \emph{admissible}. Any admissible curve $\overline\gamma=(\gamma,V)$ is a lift to $Q\times{\frak g}$ of its first component
$\gamma$.
\par\smallskip

Let $\overline L:Q\times {\frak g}\to {\mathbb R}$ be the function
 $$\overline L=L\circ\varphi:(x,X)\mapsto L\bigl(X_Q(x)\bigr)\,,
  \quad x\in Q\,,\ X\in{\frak g}\,,
 $$
and let $\overline I_{\overline L}$ be the functional, defined on the space of
of parametrized piecewise continuous curves 
$\overline\gamma:[t_0,t_1]\to Q\times{\frak g}$,
  $$\overline I_{\overline L}(\overline\gamma)=\int_{t_0}^{t_1} \overline L
  \circ\overline\gamma(t)\,\d t\,.
  $$
If $\gamma:[t_0,t_1]\to Q$ is a parametrized continuous, piecewise differentiable curve in $Q$, and 
$\overline\gamma:[t_0,t_1]\to Q\times{\frak g}$ 
any lift of $\gamma$ to $Q\times{\frak g}$, we have
 $$\overline I_{\overline L}(\overline\gamma)=I_L(\gamma))\,.$$
Therefore looking for continuous, piecewise smooth maps $\gamma:[t_0,t_1]\to Q$ at which $I$ is stationary amounts to look for admissible curves $\overline \gamma:[t_0,t_1]\to Q\times{\frak g}$ at which $\overline I_{\overline L}$
\emph{restricted to the space of admissible curves in} $Q\times{\frak g}$ is stationary. The  equation so obtained will be, of course, equivalent to the Euler-Lagrange equation, but will be expressed differently, with different variables.
\par\smallskip

In order to write that the parametrized continuous and piecewise smooth curve $\gamma: [t_0,t_1]\to Q$ is an extremal of $I$,
Poincar\'e considers a \emph{variation with fixed endpoints} of that curve, \emph{i.e.}, a continuous and piecewise smooth
map $(t,s)\mapsto \gamma_s(t)$, defined on the product of intervals
$[t_0,t_1]\times[-\varepsilon,\varepsilon]$, with values in $Q$, such that
 \begin{equation*}
 \begin{split}
   \gamma_0(t)&=\gamma(t)\quad\hbox{for each\ }t\in[t_0,t_1]\,,\\
   \gamma_s(t_0)&=\gamma(t_0)\quad\hbox{and}\quad\d\gamma_s(t_1)=\gamma(t_1) \quad\hbox{for each\ }s\in[-\varepsilon,\varepsilon]\,.
 \end{split}
 \end{equation*}  
There exists a (non unique) piecewise continuous and smooth map $(t,s)\mapsto \overline\gamma_s(t)$, defined 
on the product of intervals $[t_0,t_1]\times[-\varepsilon,\varepsilon]$, with values in $Q\times{\frak g}$,\
such that for each $s\in[-\varepsilon,\varepsilon]$, $\overline\gamma_s$ is a lift of $\gamma_s$ to $Q\times{\frak g}$.
In other words, the map $(t,s)\mapsto\overline\gamma(t,s)$ is such that for each $t\in[t_0,t_1]$ and each $s\in[-\varepsilon,\varepsilon]$,
    $$\overline\gamma_s(t)=\bigl(\gamma_s(t),V_s(t)\bigr)\,,\quad\hbox{with}\ V_s(t)\in{\frak g}\,,$$
and, for each $(t,s)$ at which the map $(t,s)\mapsto\gamma_s(t)$ is smooth
    $$\bigl(V_s(t)\bigr)_Q\bigl(\gamma_s(t)\bigr)=\frac{\d\gamma_s(t)}{\d t}\,.\eqno(3)$$
The parametrized curve $\gamma$ is an extremal of $I$ if and only if, for any variarion $(t,s)\mapsto\gamma_s(t)$ with fixed endpoints of $\gamma$, we have
 $$\frac{\d I_L(\gamma_s)}{\d s}\Bigl|_{s=0}=0\,.$$
Poincar\'e uses the fact that, for each $s\in[-\varepsilon,\varepsilon]$
 $$I_L(\gamma_s)=\overline I_{\overline L}(\overline\gamma_s)\,,\quad\hbox{which implies}\quad
 \frac{\d I_L(\gamma_s)}{\d s}\Bigl|_{s=0}=\frac{\d \overline I_{\overline L}(\overline\gamma_s)}{\d s}\Bigl|_{s=0}\,.$$
Therefore he can write
 $$\frac{\d I_L(\gamma_s)}{\d s}\Bigl|_{s=0} 
  = \frac{\d}{\d s}\left(\int_{t_0}^{t_1}\overline L\bigl(\gamma_s(t),V_s(t)\bigr)\,\d t\right)\Biggm|_{s=0}\,.$$
In local coordinates, the function ${\overline L}:Q\times{\frak g}\to {\mathbb R}$ is expressed as a functions of
$n+r$ real variables: the $n$ local coordinates $(x^1,\ldots,x^n)$ of $x\in Q$ (for a given chart of $Q$) 
and the $r$ components $(X^1, \ldots, X^r)$ of $X\in {\frak g}$ in a given basis $(X_1,\ldots,X_r)$ of $\frak g$.
Therefore
 \begin{equation*}
 \begin{split}
 \frac{\d I_L(\gamma_s)}{\d s}\Bigl|_{s=0}= \int_{t_0}^{t_1}&\Biggl[
  \sum_{i=1}^n\frac{\partial\overline L(\gamma_s(t), V_s(t))}{\partial x^i}\frac{\partial \gamma_s^i(t)}
{\partial s}\\
  +&\sum_{k=1}^r\frac{\partial\overline L(\gamma_s(t), V_s(t))}{\partial X^k}\frac{\partial V_s^k(t)}{\partial s}\Biggr]
  \Biggm|_{s=0}\,\d t\,.
 \end{split}
 \end{equation*}
We set
 $$\frac{\partial \gamma_s^i(t)}{\partial s}\Bigm|_{s=0}=\delta \gamma^i(t)\,,\quad \frac{\partial V_s^k(t)}{\partial s}\Bigm|_{s=0}=\delta V^k(t)\,,$$
so we may write
\begin{equation*} 
 \frac{\d I_L(\gamma_s)}{\d s}\Bigl|_{s=0}= \int_{t_0}^{t_1}\left[
  \sum_{i=1}^n\frac{\partial\overline L\bigl(\gamma(t), V(t)\bigr)}{\partial x^i}\delta \gamma^i(t)
  +\sum_{k=1}^r\frac{\partial\overline L\bigl(\gamma(t), V(t)\bigr)}{\partial X^k}\delta V^k(t)\right]\d t.
 \end{equation*}
For each $t$, the $\delta \gamma^i(t)$ are the component of a vector $\delta \gamma(t)\in T_{\gamma(t)}Q$
and the $\delta V^k(t)$ the components of a vector $\delta V(t)\in T_{V(t)}{\frak g}$,where
the vector space $T_{V(t)}{\frak g}$ tangent to $\frak g$ at $V(t)$ is, of course, 
canonically isomorphic to $\frak g$. Let 
 $$\d_Q\overline L:Q\times{\frak g}\to T^* Q\quad\hbox{and}\quad
    \d_{\frak g}\overline L:Q\times{\frak g}\to {\frak g}^*
 $$ 
be the partial differentials of the function 
$\overline L:Q\times{\frak g}\to {\mathbb R}$ with respect to its first and its second variable. The above equality
can be written more concisely as
  \begin{equation*} 
 \frac{\d I_L(\gamma_s)}{\d s}\Bigl|_{s=0}= \int_{t_0}^{t_1}\left[
  \Bigl\langle \d_Q\overline L\bigl(\gamma(t), V(t)\bigr),\delta \gamma(t)\Bigr\rangle
  +\Bigl\langle \d_{\frak g}\overline L\bigl(\gamma(t), V(t)\bigr),\delta V(t)\Bigr\rangle\right]\,\d t\,.
 \end{equation*}
Since $\delta\gamma(t)\in T_{\gamma(t)}Q$, there exists an 
element\footnote{Here our notations differ slightly from those of Poincar\'e, who writes
$\omega(t)$ where we write $\delta\omega(t)$. We used the symbol $\delta$ to indicate that 
$\delta\omega(t)$ is an infinitesimal quantity.} $\delta\omega(t)\in{\frak g}$ (non unique when $r>n$) such that 
 $$\delta \gamma(t)=\varphi\bigl(\gamma(t),\delta\omega(t)\bigr)
                   =\bigl(\delta\omega(t)\bigr)_Q\bigl(\gamma(t)\bigr)\,.$$
We may impose $\delta\omega(t_0)=0$ and $\delta\omega(t_1)=0$ since $\delta \gamma(t)$ vanishes for $t=t_0$ and $t=t_1$.
\par\smallskip

Replacing $\delta\gamma(t)$ by its expression in terms of $\delta\omega(t)$, we may write
 $$\Bigl\langle \d_Q\overline L\bigl(\gamma(t), V(t)\bigr),\delta \gamma(t)\Bigr\rangle
   =\Bigl\langle p_{{\frak g}^*}\circ\varphi^T\circ \d_Q\overline L
        \bigl(\gamma(t),V(t)\bigr),\delta\omega(t)\Bigr\rangle\,, 
 $$
where $p_{{\frak g}^*}:Q\times{\frak g}^*\to{\frak g}^*$ is the canonical projection on the second factor and
$\varphi^T:T^*Q\to Q\times{\frak g}^*$ the injective vector bundles homomorphism transpose of the surjective
vector bundle homomorphism $\varphi:Q\times{\frak g}\to TQ$. Poincar\'e denotes by $\Omega$ the map
 $$\Omega=p_{{\frak g}^*}\circ\varphi^T\circ \d_Q\overline L:Q\times{\frak g}\to{\frak g}^*\,.$$
The expressions of its components in the basis of ${\frak g}^*$ dual of the basis $(X_1,\ldots,X_r)$ of 
$\frak g$ are
 $$\Omega_k(x,X)=\sum_{i=1}^n\frac{\partial{\overline L}(x,X)}{\partial x^i}\bigl(X_k\bigr)_Q^i(x)\,.
 $$
The expression of the derivative of $I_L(\gamma_s)$ with respect to $s$, for $s=0$, becomes
\begin{equation*}
 \frac{\d I_L(\gamma_s)}{\d s}\Bigl|_{s=0}= \int_{t_0}^{t_1}\left(\Bigl\langle\Omega\bigl(\gamma(t),V(t)\bigr),\delta\omega(t)\Bigr\rangle
  +\Bigl\langle \d_{\frak g}\overline L\bigl(\gamma(t), V(t)\bigr),\delta V(t)\Bigr\rangle\right)\,\d t\,.
 \end{equation*}
Then Poincar\'e writes, without further explanation, \lq\lq Or on trouve ais\'ement 
 $$\delta V^i(t)=\frac{\d \bigl(\delta\omega^i(t)\bigr)}{\d t}+\sum_{(s,k)=(1,1)}^{(r,r)} c_{lk}^iV^k(t)\delta\omega^l(t)
\hbox{\ \rq\rq}\,,$$
$V^k(t)$ ($1\leq k\leq r$) and $\delta\omega^l(t)$ ($1\leq l\leq r$) being the components of $V(t)$ and $\delta\omega(t)$ in the basis $(X_1,\ldots,X_r)$ of the Lie algebra 
$\frak g$, the $c_{lk}^i$ ($1\leq i,k,l\leq r$) being the stucture constants of that Lie algebra in that basis.    
\par\smallskip

Poincar\'e probably obtained that result as follows. Let us calculate the derivatives with respect to $s$ of both sides of Equality $(3)$, and then set $s=0$. Since that equality is satisfied for all $s\in[-\varepsilon, \varepsilon]$, we must have
 $$\frac{\d}{\d s}\Bigl(\bigl(V_s(t)\bigr)_Q\bigl(\gamma_s(t)\bigr)\Bigr)\Bigm|_{s=0}
   =\frac{\d}{\d s}\left(\frac{\d\gamma_s(t)}{\d t}\right)\Bigm|_{s=0}\,.\eqno{(4)}
 $$
Both sides of the above equality are vectors tangent to $TQ$ at 
$\displaystyle\frac{\d\gamma(t)}{\d t}$. In order to evaluate these vectors, let 
$x^1,\ldots,x^n$ be the local coordinates in an admissible chart of $N$ whose domain
contains $\gamma(t)$, and $x^1,\ldots,x^n,\allowbreak v^1,\ldots,v^n$ be the local
coordinates in the associated chart of $TQ$. The local coordinates of 
$\gamma(t)$ can be written $\gamma^i(s,t)$ ($1\leq i\leq n$, where the $\gamma^i$
are smooth functions of the two real variables $s$ and $t$. The local coordinates of 
$\displaystyle\frac{\d\gamma_s(t)}{\d t}$ and 
$\displaystyle\frac{\d}{\d s}\bigl(\gamma_s(t)\bigr)\Bigm|_{s=0}$ 
are, respectively,
 $$\gamma^i(s,t)\,,\quad \frac{\partial\gamma^j(s,t)}{\partial t}\,,
                    \quad 1\leq i,j\leq n\,,
 $$
and
 $$\gamma^i(0,t)=\gamma^i(t)\,,\quad 
                  \frac{\partial\gamma^j(s,t)}{\partial s}\Bigm|_{s=0}
                  =\delta\gamma^j(t)\,,
                    \quad 1\leq i,j\leq n\,.
 $$
Let $x^i,v^j,\dot x^k,\dot v^l$ ($1\leq i,j,k,l\leq n$) be the local coordinates in the chart of $T(TQ)$ associated to the considered chart of $TQ$. The coordinates
$x^i$, $v^j$, $\dot x^k$ and $\dot v^l$ will be called, respectively, the first, second,
third and fourth set of $n$ coordinates of an element in $T(TQ)$. The
first set of $n$ coordinates of $\displaystyle \frac{\d}{\d s}\Bigl(\bigl(V_s(t)\bigr)_Q\bigl(\gamma_s(t)\bigr)\Bigr)\Bigm|_{s=0}$ is $\gamma^i(t)$ ($1\leq i\leq n$); the second is 
$\displaystyle\frac{\partial\gamma^j(s,t)}{\partial t}\Bigm|_{s=0}=\frac{\d\gamma^j(t)}{\d t}$ ($1\leq j\leq n$), and the third is $\delta\gamma^k(t)$ ($1\leq k\leq n$). The fourth is
 \begin{align*}
  \frac{\partial}{\partial s}\left(\sum_{j=1}^rV^j_s(t)\bigl(X_j\bigr)_Q^l
                                 \bigl(\gamma_s(t)\bigr)
                                  \right)\Biggm|_{s=0}
  &=\sum_{j=1}^r\delta V^j(t)\bigl(X_j\bigr)_Q^l
                                 \bigl(\gamma(t)\bigr)\\
  &\quad +\sum_{j=1}^rV^j(t)\sum_{i=1}^n\delta\gamma^i(t)\frac{\partial(X_j)_Q^l}
      {\partial x^i}\bigl(\gamma(t)\bigr)\,,\quad (1\leq l\leq n)\,,  
 \end{align*}
where $(X_j)_Q^l(x^1,\ldots,x^n)$ is the $n+l$-th coordinate, in the considered chart of 
$TQ$, of the value taken by the vector field $(X_j)_Q$ at the point of
$Q$ of coordinates $x^1,\ldots,x^n$, and where we have written $\gamma(t)$ for $\gamma^1(t),\ldots,\gamma^n(t)$. By using the equalities
$\delta\gamma(t)=\bigl(\delta\omega(t)\bigr)_Q\bigl(\gamma(t)\bigr)$, 
$\displaystyle\sum_{j=1}^r\delta V^j(t)(X_j)_Q=\bigl(\delta V)_Q$ and
$\displaystyle\sum_{j=1}^rV^j(t)(X_j)_Q=\bigl(V)_Q$,
we can write
  \begin{align*}
  \frac{\partial}{\partial s}
  &\left(\sum_{j=1}^rV^j_s(t)\bigl(X_j\bigr)_Q^l
                                 \bigl(\gamma_s(t)\bigr)
                                  \right)\Biggm|_{s=0}\\
  &=\sum_{j=1}^r\delta V^j(t)\bigl(X_j\bigr)_Q^l\bigl(\gamma(t)\bigr)
    +\sum_{j=1}^rV^j(t)\sum_{i=1}^n\bigl(\delta\omega\bigr)_Q^i
      \bigl(\gamma(t)\bigr)
       \frac{\partial(X_j)_Q^l}
      {\partial x^i}\bigl(\gamma(t)\bigr)\\
  &=\bigl(\delta V(t)\bigr)_Q^l\bigl(\gamma(t)\bigr)
    +\sum_{i=1}^n\bigl(\delta\omega\bigr)_Q^i\bigl(\gamma(t)\bigr)
      \frac{\partial}{\partial x^i}\left(
       \sum_{j=1}^rV^j(t)(X_j)_Q^l\right)
        \bigl(\gamma(t)\bigr)\\
  &=\bigl(\delta V(t)\bigr)_Q^l\bigl(\gamma(t)\bigr)
    +\sum_{i=1}^n\bigl(\delta\omega\bigr)_Q^i\bigl(\gamma(t)\bigr)
      \frac{\partial(V(t))_Q^l}{\partial x^i}
       \bigl(\gamma(t)\bigr)\,,\quad (1\leq l\leq n)\,, 
 \end{align*}
where $\bigl(\delta\omega\bigr)_Q^i(t)\bigl(\gamma(t)\bigr)$,
$\bigl(\delta V(t)\bigr)_Q^l\bigl(\gamma(t)\bigr)$ and
$\bigl(V(t)\bigr)_Q^l\bigl(\gamma(t)\bigr)$
are, respectively, the the
$n+i$-th coordinate and the $n+l$-th coordinates, in the considered chart of $TQ$, of the value taken by the vector fields $(\delta\omega)_Q$, $\bigl(\delta V(t)\bigr)_Q$ and
$\bigl(V(t)\bigr)_Q$ at the point $\gamma(t)\in Q$. 
\par\smallskip

Let us now evaluate the right hand side of equality $(4)$. That vector is tangent to $TQ$
at $\displaystyle\frac{\d\gamma(t)}{\d t}$, while the vector
$\displaystyle\frac{\d}{\d t}\left(\frac{\partial\gamma_s(t)}{\partial s}\Bigm|_{s=0}\right)=\frac{\d\bigl(\delta\gamma(t)\bigr)}{\d t}
$
is tangent to $TQ$ at $\delta\gamma(t)$. These two vectors are therefore not equal.
However, they are related by 
 $$\frac{\d}{\d s}\left(\frac{\d\gamma_s(t)}{\d t}\right)\Biggm|_{s=0}
  =\kappa_Q\left(\frac{\d\bigl(\delta\gamma(t)\bigr)}{\d t}\right)\,,
 $$
where $\kappa_Q:T(TQ)\to T(TQ)$ is the canonical involution of the second tangent bundle 
$T(TQ)$ \cite{Tulczyjew1989}. It means that in the considered chart of $T(TQ)$
the first set of $n$ coordinates, as well as the fourth
set of $n$ coordinates, of these two vectors, are equal, 
while the second set of $n$ coordinates of each one of these two vectors 
is equal to the third set of $n$ coordinates of the other.
Therefore, the fourth set of $n$ coordinates of
$\displaystyle \frac{\d}{\d s}\left(\frac{\d\gamma_s(t)}{\d t}\right)\Biggm|_{s=0}$, being equal to the fourth set of $n$ coordinates of
$\displaystyle\frac{\d\bigl(\delta\gamma(t)\bigr)}{\d t}$, can be expressed as
 \begin{align*}
  \frac{\d\bigl(\delta\gamma^l(t)\bigr)}{\d t} 
   &=\frac{\d}{\d t}\Bigl(\bigl(\delta\omega(t)\bigr)_Q^l\bigl(\gamma(t)\bigr)\Bigr)
   =\frac{\d}{\d t}\left(\sum_{j=1}^r\delta\omega^j(t)(X_j)_Q^l\bigl(\gamma(t)\bigr)
     \right)\\
   &=\sum_{j=1}^r\left(\frac{\d\bigl(\delta\omega^j(t)\bigr)}{\d t}(X_j)_Q^l
      \bigl(\gamma(t)\bigr)+\delta\omega^j(t)\sum_{i=1}^n\frac{\d\gamma^i(t)}{\d t}
       \frac{\partial(X_j)_Q^l}{\partial x^i}\bigl(\gamma(t)\bigr)\right)\\
   &=\left(\frac{\d\bigl(\delta\omega(t)\bigr)}{\d t}\right)_Q^l\bigl(\gamma(t)\bigr)
     +\sum_{i=1}^n\frac{\d\gamma^i(t)}{\d t}\frac{\partial}{\partial x^i}
       \left(\sum_{j=1}^r\delta\omega^j(t)(X_j)_Q^l\right)\bigl(\gamma(t)\bigr)\\
   &=\left(\frac{\d\bigl(\delta\omega(t)\bigr)}{\d t}\right)_Q^l\bigl(\gamma(t)\bigr)
     +\sum_{i=1}^n\frac{\d\gamma^i(t)}{\d t}\frac{\partial
                   \bigl(\delta\omega(t)\bigr)_Q^l}{\partial x^i}
       \bigl(\gamma(t)\bigr)\\
   &=\left(\frac{\d\bigl(\delta\omega(t)\bigr)}{\d t}\right)_Q^l\bigl(\gamma(t)\bigr)
     +\sum_{i=1}^n\bigl(V(t)\bigr)_Q^i\frac{\partial
                   \bigl(\delta\omega(t)\bigr)_Q^l}{\partial x^i}
       \bigl(\gamma(t)\bigr)\,,
        \quad (1\leq l\leq n)\,,
 \end{align*}
where we have used the equality 
$\displaystyle\frac{\d\gamma(t)}{\d t}=\bigl(V(t)\bigr)_Q\bigl(\gamma(t)\bigr)$.
\par\smallskip

Equality $(4)$ therefore leads, for each $l$ ($1\leq l\leq n$) to
 \begin{align*}
  \bigl(\delta V(t)\bigr)_Q^l\bigl(\gamma(t)\bigr)
    &+\sum_{i=1}^n\bigl(\delta\omega\bigr)_Q^i\bigl(\gamma(t)\bigr)
      \frac{\partial(V(t))_Q^l}{\partial x^i}
       \bigl(\gamma(t)\bigr)\\
   &=\left(\frac{\d\bigl(\delta\omega(t)\bigr)}{\d t}\right)_Q^l\bigl(\gamma(t)\bigr)
     +\sum_{i=1}^n\bigl(V(t)\bigr)_Q^j\frac{\partial
                   \bigl(\delta\omega(t)\bigr)_Q^l}{\partial x^i}
       \bigl(\gamma(t)\bigr)\,.
 \end{align*}
After reordering, we see that the right hand side is the $n+l$-th coordinate 
of the value at $\gamma(t)$ of the Lie bracket of the vector fields 
$\bigl(V(t)\bigr)_Q$ and $\bigl(\delta\omega(t)\bigr)_Q$. Moreover, 
since the map, which associates to each $Y\in{\frak g}$, the vector field $Y_Q$,
is a Lie algebras homomorphism, we have 
$\Bigl[\bigl(V(t)\bigr)_Q,\bigl(\delta\omega(t)\bigr)_Q\Bigr]
=\bigl[V(t), \delta\omega(t)\bigr]_Q$. So we have
 \begin{align*}
  \left(\delta V(t)-\frac{\d\bigl(\delta\omega(t)\bigr)}{\d t}
   \right)_Q^l
   \bigl(\gamma(t)\bigr)
  =
  &\sum_{j=1}^n\bigl(V(t)\bigr)_Q^j\bigl(\gamma(t)\bigr)\frac{\partial
                   \bigl(\delta\omega(t)\bigr)_Q^l}{\partial x^j}
         \bigl(\gamma(t)\bigr)\\
  &\quad-\sum_{i=1}^n\bigl(\delta\omega(t)\bigr)_Q^i\bigl(\gamma(t)\bigr)
      \frac{\partial(V(t))_Q^l}{\partial x^i}
       \bigl(\gamma(t)\bigr)\\
  &=\Bigl[\bigl(V(t)\bigr)_Q,\bigl(\delta\omega(t)\bigr)_Q\Bigr]^l
     \bigl(\gamma(t)\bigr)\\
  &=\bigl[V(t),\delta\omega(t)\bigr]_Q^l\bigl(\gamma(t)\bigr)\,,
     \quad(1\leq l\leq n)\,.
 \end{align*}
Therefore, for each $t\in[t_0,t_1]$,
 $$\delta V(t)-\frac{\d \bigl(\delta\omega(t)\bigr)}{\d t}
    -\bigl[V(t),\delta\omega(t)\bigr]\in{\frak g}_{\gamma(t)}\,. 
 $$
Since $\delta V(t)$ is determined only up to addition of a map $[t_0,t_1]\to{\frak g}$ which, for each
$t\in[t_0,t_1]$, takes it value in the isotropy Lie algebra ${\frak g}_{\gamma(t)}$, we can choose 
 $$\delta V(t)=\frac{\d \bigl(\delta\omega(t)\bigr)}{\d t}
    +\bigl[V(t),\delta\omega(t)\bigr]\,,$$
which is the result written in local coordinates by Poincar\'e.
Replacing $\delta V(t)$ by its expression, we obtain
\begin{equation*}
 \begin{split}
 \frac{\d I_L(\gamma_s)}{\d s}\Bigl|_{s=0}= \int_{t_0}^{t_1}&\Bigl[\Bigl\langle\Omega\bigl(\gamma(t),V(t)\bigr),\delta\omega(t)\Bigr\rangle\\
  &+\Bigl\langle \d_{\frak g}\overline L\bigl(\gamma(t), V(t)\bigr),\frac{\d \bigl(\delta\omega(t)\bigr)}{\d t}
    +\bigl[V(t),\delta\omega(t)\bigr]\Bigr\rangle\Bigr]\,\d t\,.
  \end{split} 
\end{equation*}
We transform the second term of the right hand side by writing
 \begin{equation*}
 \begin{split}
 \Bigl\langle \d_{\frak g}\overline L\bigl(\gamma(t), V(t)\bigr),\frac{\d \bigl(\delta\omega(t)\bigr)}{\d t}\Bigr\rangle
   &=\frac{\d }{\d t}\Bigl\langle \d_{\frak g}\overline L\bigl(\gamma(t), V(t)\bigr),\delta\omega(t)\Bigr\rangle\\
   & -\Bigl\langle \frac{\d }{\d t}\Bigl(\d_{\frak g}\overline L\bigl(\gamma(t), V(t)\bigr)\Bigr),\delta\omega(t)\Bigr\rangle\,,
 \end{split}
 \end{equation*}
therefore by integration
 \begin{equation*}
 \begin{split}\int_{t_0}^{t_1}
 \Bigl\langle \d_{\frak g}\overline L\bigl(\gamma(t), V(t)\bigr),\frac{\d \bigl(\delta\omega(t)\bigr)}{\d t}\Bigr\rangle
\d t
   &=\Bigl\langle \d_{\frak g}\overline L\bigl(\gamma(t), V(t)\bigr),\delta\omega(t)\Bigr\rangle\Bigm|_{t=t_0}^{t=t_1}\\
   &\phantom{=}-\int_{t_0}^{t_1}\Bigl\langle \frac{\d }{\d t}\Bigl(\d_{\frak g}\overline L\bigl(\gamma(t), V(t)\bigr)\Bigr),\delta\omega(t)\Bigr\rangle\d t\\
   &=-\int_{t_0}^{t_1}\Bigl\langle \frac{\d }{\d t}\Bigl(\d_{\frak g}\overline L\bigl(\gamma(t), V(t)\bigr)\Bigr),\delta\omega(t)\Bigr\rangle\d t 
 \end{split}
 \end{equation*}
since $\delta\omega(t_0)=\delta\omega(t_1)=0$. Similarly
 \begin{equation*}
  \begin{split}
   \Bigl\langle \d_{\frak g}\overline L\bigl(\gamma(t), V(t)\bigr),
    \bigl[V(t),\delta\omega(t)\bigr]\Bigr\rangle
    &=\Bigl\langle \d_{\frak g}\overline L\bigl(\gamma(t), V(t)\bigr),\ad_{V(t)}\bigl(\delta\omega(t)\bigr)\Bigr\rangle\\
    &=\Bigl\langle\ad^*_{V(t)}\Bigl(\d_{\frak g}\overline L\bigl(\gamma(t), V(t)\bigr)\Bigr),\delta\omega(t)\Bigr\rangle\,.
  \end{split}
 \end{equation*}
For each $V\in{\frak g}$ we have denoted by $\ad_V:{\frak g}\to{\frak g}$ the Lie algebras homomorphism
 $$\ad_V(X)=[V,X]=-[X,V]\,,$$ 
and by $\ad^*_V:{\frak g}^*\to{\frak g}^*$ the transpose of $\ad_V$, so that
 $$\langle\xi, \ad_V X\rangle=\langle\ad^*_V\xi,X\rangle\,,\quad \xi\in{\frak g}^*\,,\ V\ \hbox{and}\ X\in{\frak g}\,.$$
Finally we obtain
\begin{align*}
 \frac{\d I_L(\gamma_s)}{\d s}\Bigl|_{s=0}
 = \int_{t_0}^{t_1}&\Biggl(\Bigl\langle\Omega\bigl(\gamma(t),V(t)\bigr)\\
  -&\left(\frac{\d }{\d t}-\ad^*_{V(t)}\right)\Bigl(\d_{\frak g}\overline L\bigl(\gamma(t), V(t)\bigr)\Bigr)
  \,,\,\delta\omega(t)\Bigr\rangle\Biggr)\,\d t\,.
\end{align*}
Since $\delta\omega(t)$ can be chosen arbitrarily with the only restriction of vanishing at the end points, $\gamma$ is an extremal of $I$ if and only if
 $$\left(\frac{\d }{\d t}-\ad^*_{V(t)}\right)\Bigl(\d_{\frak g}\overline L\bigl(\gamma(t), V(t)\bigr)\Bigr)=\Omega\bigl(\gamma(t),V(t)\bigr)\,,\eqno(\hbox{\bf E-P1})
 $$
with
 $$\Omega=p_{{\frak g}^*}\circ\varphi^T\circ \d_Q\overline L\,.$$
It is the intrinsic expression (independent of any choice of local coordinates) of  
the \emph{Euler-Poincar\'e equation}. In his note,
Poincar\'e writes it, in local coordinates, under the form
\begin{equation*}
 \begin{split}
 \frac{\d }{\d t}\left(\frac{\partial{\overline L}\bigl(\gamma(t),V(t)\bigr)}{\partial X^i}\right)=&
 \Omega_i\bigl(\gamma(t),V(t)\bigr)\\
 &+\sum_{(k,s)=(1,1)}^{(r,r)} c^k_{is}V^s(t)\frac{\partial{\overline L}\bigl(\gamma(t),V(t)\bigr)}{\partial X^k}\,.
 \end{split}
 \end{equation*}
Of course, together with the Euler-Poincar\'e equation, we must consider the compatibility condition
 $$\frac{\\d\gamma(t)}{\d t}=\varphi\bigl(\gamma(t),V(t))\,.\eqno(\hbox{\bf CC})$$

\subsection{Comments made by Poincar\'e.}
At the end of his note, Poincar\'e briefly indicates that the $\Omega_i\bigl(\gamma(t),V(t)\bigr)$ 
can be interpreted as the components of forces exerted on the system. About his equation, 
which in local coordinates appears as a system of several equations, 
he indicates that they contain, as special cases, the well known Euler-Lagrange equations 
and the Euler equations governing the motion of a rigid body. Finally he writes
\lq\lq Elles sont surtout int\'eressantes dans le cas o\`u $U$ \'etant nul, $T$ ne d\'epend que des $\eta$\rq\rq.
He denoted by $T$ the kinetic energy expressed as a function defined on $Q\times{\frak g}$ rather than
on $TQ$, and by $U$ the potential energy, defined on $Q$. The function denoted by $T-U$ by Poincar\'e is therefore
$\overline L$ in our notations, and the variable $\eta$ on which $T$ depends is, in our notations,
the second variable $X$ on which depends $\overline L$. We see therefore that Poincar\'e writes 
that his equation is useful mainly when $\overline L:Q\times{\frak g}\to {\mathbb R}$ only depends on its second variable
$X\in{\frak g}$. This last remark made by Poincar\'e is the origin of the procedure called \emph{Lagrangian reduction} by modern authors, discussed in Section \ref{EPR}.

\section{Two Special Cases of the Euler-Poincar\'e Equation}\label{Examples}

\subsection{Euler-Lagrange equation.}
In the domain of a chart with local coordinates $x^1,\ldots,\allowbreak
x^n$, the configuration space $Q$ 
can be identified with an open subset of ${\mathbb R}^n$. The Lie algebra $\frak g$ is the Abelian Lie algebra ${\mathbb R}^n$, 
coordinates $X^1,\ldots,X^n$, with the zero bracket. The Lie algebras homomorphism $\psi$ is the linear map
$$\psi(X_i)=\frac{\partial}{\partial x^i}\,,\quad 1\leq i\leq n\,.$$
The vector bundle isomorphism $\varphi:Q\times{\frak g}\to TQ$ is given by
 $$\varphi(x,X_i)=\left(\frac{\partial}{\partial x^i}\right)(x)\,,\quad 1\leq i\leq n\,.$$
Let $L:TQ\to{\mathbb R}$ be the Lagrangian. In local coordinates,  the expression of $\overline L=L\circ\varphi$ 
is the same as that of $L$: 
 $${\overline L}(x^1,\ldots, x^n, X^1,\ldots, X^n)=L(x^1,\ldots, x^n, X^1,\ldots, X^n)\,.$$ 
Since for each $x\in{\mathbb R}^n$ $\varphi_x:{\frak g}\to T_xQ$ is expressed as the identity map, its transpose
$\varphi^T_x:T^*_xQ\to{\frak g}^*$ too is expressed as the identity map. The coadjoint action 
$\ad^*$ is identically zero since the Lie algebra $\frak g$ is Abelian. The Euler-Poincar\'e equation 
becomes 
 $$\frac{\d }{\d t}\Bigl(\d_{\frak g} L\bigl(\gamma(t),V(t)\bigr)\Bigr)=\d_Q L\bigl(\gamma(t),V(t)\bigr)\,.$$
We recognize the well known Euler-Lagrange equation.

\subsection{Euler Equation for the Motion of a Rigid Body.}\label{rigidbody}
In this section the reference frame considered is that in which the Earth is at rest. As a first approximation
we consider it as Galilean, the centrifugal force due to the Earth rotation exerted on a material body  
being included in its weight (the gravity force exerted by the Earth on that body) and the Coriolis force
being neglected. We study in that reference frame the motion of a material rigid body with at least 
three distinct non collinear material points.  We assume that units of time and of length, 
an origin of time and an orientation of space have been chosen. The physical space and the physical 
time can then be mathematically represented by an Euclidean three dimensional oriented affine space $E$
and by the real line ${\mathbb R}$, respectively.
A configuration of the body in space is represented by an affine, isometric, orientation preserving map, 
defined on an abstract Euclidean three dimensional oriented affine space $S$ 
(called the \emph{space of material points}), with values in $E$.
For each $z\in S$ representing some material point of the body, 
the position of that material point in space, when the configuration of the body 
in space is represented a map $x:S\to E$, is $x(z)$.
\par\smallskip
 
We assume, for simplicity, that a given point $O_S$ of the material 
body is constrained, by an ideal constraint, 
to remain at a fixed position $O_E$ in physical space. 
By choosing $O_S$ and $O_E$ as origins, respectively of $S$ and $E$, 
we can consider these spaces as \emph{vector} spaces. The set $Q$ of all possible positions of the material body 
in space is therefore the set  $\Isom(S,E)$ of linear, orientation preserving isometries of $S$ onto $E$.  
\par\smallskip 
 
Let $G_S$ and $G_E$ be the Lie groups (both isomorphic to 
${\rm SO}(3)$) of linear automorphisms of the oriented Euclidean vector spaces $S$ and $E$, respectively, 
${\frak g}_S$ and ${\frak g}_E$ their Lie algebras. The groups $G_S$ and $G_E$
both act on $Q$, respectively on the right and on the left, by two commuting, transitive and free actions 
$\Phi_S$ and $\Phi_E$, given by the formulae, where $x\in Q=\Isom(S,E)$, $g_S\in G_S$, $g_E\in G_E$,
 $$\Phi_S(x, g_S)=x\circ g_S\,,\quad \Phi_E(g_E,x)=g_E\circ x\,.
 $$ 
\par\smallskip

The values at $x\in Q$ of the fundamental vector fields on $Q$ associated to $X^S\in{\frak g}_S$ and $Y^E\in{\frak g}_E$ are 
 $$X^S_Q(x)=\frac{\d \bigl(x\circ\exp(sX^S)\bigr)}{ds}\Bigm|_{s=0}\,,\quad Y^E_Q(x)=\frac{\d (\exp(sY^E)\circ x)}{ds}\Bigm|_{s=0}\,.$$  
The maps $\psi_S:{\frak g}_S\to A^1(Q)$, $X^S\mapsto X^S_Q$, and $\psi_E:{\frak g}_E\to A^1(Q)$,
$Y^E\mapsto Y^E_Q$, are Lie algebras homomorphisms. However, one should be careful with signs: since $\Phi_S$ is an action of $G_S$ on the right, the bracket of elements in the Lie algebra ${\frak g}_S$ must be the bracket of 
\emph{left-invariant} vector fields on the Lie group $G_S$; similarly, since $\Phi_E$ is an action 
of $G_E$ on the left, the bracket of elements in the Lie algebra ${\frak g}_E$ must be the bracket 
of \emph{right-invariant} vector fields on the Lie group $G_E$. 
\par\smallskip

The maps $\varphi_S:Q\times{\frak g}_S\to TQ$ and $\varphi_E:Q\times{\frak g}_E\to TQ$, defined by
 $$\varphi_S(x,X^S)=X^S_Q(x)\,,\quad \varphi_E(x,Y^E)=Y^E_Q(x)\,,
    \quad x\in Q\,,\ X^S\in{\frak g}_S\,,\ Y^E\in{\frak g}_E$$
are vector bundles isomorphisms.
\par\smallskip

A \emph{motion} of the rigid body during a time interval $[t_0,t_1]$ is mathematically described by a smooth 
parametrized curve $\gamma:[t_0,t_1]\to Q$. For each time $t\in [t_0,t_1]$, there exists a unique 
$\Omega_S(t)\in{\frak g}_S$ and a unique $\Omega_E(t)\in{\frak g}_E$ such that
 $$\varphi_S\bigl(\gamma(t),\Omega_S(t)\bigr)=\frac{\\d\gamma(t)}{\d t}\,,\quad
   \varphi_E\bigl(\gamma(t),\Omega_E(t)\bigr)=\frac{\\d\gamma(t)}{\d t}\,.
 $$
In his beautiful paper~\cite{arnold}, Vladimir Arnold clearly explained their physical interpretation:  
$\displaystyle \frac{\\d\gamma(t)}{\d t}\in T_{\gamma(t)}Q$ is the \emph{true angular velocity} of the body, 
$\Omega_S(t)$ is the \emph{angular velocity of the body seen by an observer bound to the moving body} and moving with it,
and $\Omega_E(t)$ is the \emph{angular velocity of the body seen by an observer bound to the Galilean reference frame in which the motion is studied} and at rest with respect to that reference frame.
\par\smallskip

The \emph{kinetic energy} of the body is
 $$T\left(\frac{\d\gamma(t)}{\d t}\right)=\frac{1}{2} I\bigl(\Omega_S(t),\Omega_S(t)\bigr)\,,
 $$
where $I:{\frak g}_S\times{\frak g}_S\to{\mathbb R}$ is a symmetric, positive definite bilinear form
which describes the \emph{inertia properties} of the body. It does not depend on time nor on the configuration 
$\gamma(t)$ of the body.  We denote by $I^\flat:{\frak g}_S\to{\frak g}_S^*$
the linear map
 $$\bigl\langle I^\flat(X^S),Y^S\bigr\rangle=I(X^S,Y^S)\,,\quad X^S\ \hbox{and}\ Y^S\in{\frak g}_S\,.$$
\par\smallskip

The \emph{potential enegy} of the body, when its configuration is $x\in Q$, is
 $$U(x)=-\bigl\langle P,x(a)\bigr\rangle=\bigl\langle x^t(P),a\bigr\rangle\,,$$
where $a\in S$ is the vector whose origin is the fixed point $O_S$ and extremity the center of mass of the body,
and $P\in E^*$ is the gravity force. We will identify $E$ with its dual $E^*$ by using the Euclidean scalar product as pairing. Therefore $P$ can be seen as a vertical vector in $E$ directed downwards, equal to the weight of the body (product of its mass with the gravity acceleration). We have denoted by $x^t:E^*\to S^*$ the transpose of the isometry $x:S\to E$.
\par\smallskip

The Lagrangian $L$ is
 $$L\left(\frac{\d\gamma(t)}{\d t}\right)
    = \frac{1}{2}\,\Bigl\langle I^\flat\bigl(\Omega_S(t)\bigr),\Omega_S(t)\Bigr\rangle 
    -\bigl\langle \bigl(\gamma(t)\bigr)^t(P),a\bigr\rangle\,.$$
We use the vector bundle isomorphism $\varphi_S:Q\times{\frak g}_S\to TQ$ to derive the Euler-Poincar\'e equation.
With $\overline L=L\circ\varphi_S$, we have
 $${\overline L}(x,X^S)=\frac{1}{2}\,\bigl\langle I^\flat(X^S),X^S\bigr\rangle 
  -\bigl\langle x^t(P),a\bigr\rangle\,,\quad X^S\in{\frak g}_S\,,\ x\in Q\,.
 $$
The partial differentials of ${\overline L}$ are 
 $$\d_Q{\overline L}(x, X^S)=dU(x)\,,\quad \d_{\frak g}{\overline L}(x, X^S)=I^\flat(X^S)\,.$$
Therefore, the Euler-Poincar\'e equation is
 $$\frac{\d }{\d t}\Bigl(I^\flat\bigl(\Omega_S(t)\bigr)\Bigr)=-\ad^*_{\Omega_S(t)}\Bigl(I^\flat\bigl(\Omega_S(t)\bigr)\Bigr)
    +{^t}\!\varphi_{S}\Bigl(dU\bigl(\gamma(t)\bigr)\Bigr)\,.
 $$
We recognize the \emph{Euler equation} for the motion of a rigid body with a fixed point.

\section{The Euler-Poincar\'e Equation in Terms of the Legendre and the Momentum Maps}\label{LegMom}

\subsection{The Lift to $T^*Q$ of the Lie Algebra Action $\psi$ and the Momentum Map.}
Let us recall that the cotangent bundle $T^*Q$ of the configuration space, 
called the \emph{phase space} of our mechanical system, is endowed with a natural $1$-form $\eta$
called the \emph{Liouville form}, defined by
 $$\bigl\langle\eta(\xi), w\bigr\rangle=\bigl\langle\xi, T\pi_Q(w)\bigr\rangle\,,\quad \xi\in T^*Q\,,\ w\in T_\xi(T^*Q)\,,$$
where $\pi_Q:T^*Q\to Q$ is the canonical projection and $T\pi_Q:T(T^*Q)\to TQ$ its prolongation to vectors.
The \emph{canonical symplectic form} on $T^*Q$ is its exterior differential $\omega=d\eta$.
To each smooth function $f:T^*Q\to{\mathbb R}$ we can associate the vector field ${\mathcal X}_f$, called the
\emph{Hamiltonian vector field with Hamiltonian} $f$, defined by
 $$i({\mathcal X}_f)\omega=-df\,.$$

The action $\psi$ of the Lie algebra $\frak g$ on the configuration space $Q$ 
can be lifted, in a very natural way, into an action $\widehat\psi$ of $\frak g$ on the
cotangent bundle $T^*Q$ as follows (see for example~\cite{lima} chapter IV, proposition 1.19). For each $X\in{\frak g}$,  
the corresponding fundamental vector field on $Q$, $\psi(Q)=X_Q$, can be considered 
as a smooth real-valued function $f_{X_Q}$ on $T^*Q$, if we set
 $$f_{X_Q}(\xi)=\bigl\langle\xi, X_Q\circ \pi_Q(\xi)\bigr\rangle\,,\quad\xi\in T^*Q\,.$$
We can therefore take its associated Hamiltonian vector field ${\mathcal X}_{f_{X_Q}}$. We define the 
fundamental vector field on $T^*Q$ associated to $X$, for the lifted action $\widehat \psi$, as
 $$\widehat\psi(X)={\mathcal X}_{f_{X_Q}}\,.$$
To shorten the notations we will write $X_{T^*Q}$ for $\widehat \psi(X)$.
\par\smallskip

The action $\widehat \psi$ is Hamiltonian and admits the momentum map $J:T^*Q\to{\frak g}^*$
defined by
 $$\bigl\langle J(\xi),X\bigr\rangle=f_{X_Q}(\xi)=\bigl\langle\xi, X_Q\circ \pi_Q(\xi)\bigr\rangle\,,
   \quad\xi\in T^*Q\,,\ X\in{\frak g}\,.$$
When the Lie algebra action $\psi$ comes from an action $\Psi$ of a Lie group $G$, the momentum map $J$ is
said to be $\Ad^*$-equivariant, which means that it is equivariant
with respect to the action of $G$ on $T^*Q$ lifted from $\Psi$, and the coadjoint action of $G$ on 
the dual ${\frak g}^*$ of its Lie algebra.

Observing that $X_Q\circ\pi_Q(\xi)=\varphi\bigl(\pi_Q(\xi),X\bigr)$ and using the transpose 
$\varphi^T: T^*Q\to Q\times{\frak g}^*$ of the vector bundle isomorphism $\varphi$, we see that
 $$J=p_{{\frak g}^*}\circ\varphi^T\,,\quad\hbox{in other words}\quad \varphi^T=(\pi_Q,J)\,.$$   

\subsection{The Legendre Map $\mathcal L$.}
The \emph{vertical differential} 
of a smooth function $f: TQ\to {\mathbb R}$ is the map, denoted by
$d_{\vert}f$, which associates, 
to each $v\in TQ$, the differential at $v$ of the restriction of $f$ to the fibre $T_{\tau_Q(v)}Q$, 
where $\tau_Q:TQ\to Q$ is the canonical projection. 
We see that $d_{\vert}f(v)$ is an element of the dual of that fibre, which is the fibre 
over $\tau_Q(v)$ of the cotangent bundle $T^*Q$. Therefore $d_{\vert} f:TQ\to T^* Q$ is a
bundles homomorphism over the identity of $Q$ (but not a \emph{vector} bundles homomorphism 
since its restriction to a fibre
is not linear, except when the restriction of $f$ to that fibre is a quadratic form).
The \emph{Legendre map} ${\mathcal L}:TQ\to T^*Q$ associated to the Lagrangian $L$ is the vertical
differential $d_{vert} L$.
\par\smallskip

The partial differential of the function $\overline L:Q\times{\frak g}\to{\mathbb R}$ with respect to its second variable, 
which plays an important part in the Euler-Poincar\'e equation, can be expressed in terms of the momentum and Legendre maps. Indeed, according to its very definition,
 $$\d_{\frak g}\overline L=p_{{\frak g}^*}\circ\varphi^T\circ{\mathcal L}\circ\varphi\,.$$
Since $J=p_{{\frak g}^*}\circ\varphi^T$, we have
 $$\d_{\frak g}\overline L=J\circ{\mathcal L}\circ\varphi\,.$$

\subsection{Another Form of the Euler-Poincar\'e Equation.}
The Euler-Poincar\'e equation can therefore be written under the form
 $$\left(\frac{\d }{\d t}-\ad^*_{V(t)}\right)\Bigl(J\circ{\mathcal L}\circ\varphi\big(\gamma(t),V(t))\Bigr)
   =J\circ \d_Q\overline L\big(\gamma(t),V(t)\bigr)\,.\eqno\hbox{\bf (E-P2)}$$
Of course, together with that equation, we must consider
the compatibility condition
 $$\frac{\d\gamma(t)}{\d t}=\varphi\bigl(\gamma(t),V(t))\,,\eqno\hbox{\bf (CC)}$$

\begin{rmk} {\rm The compatibility condition $\hbox{\bf (CC)}$ is solved with respect to
$\displaystyle\frac{\d\gamma(t)}{\d t}$. But just like the usual Euler-Lagrange equation,  
the Euler-Poincar\'e equation $\hbox{\bf (E-P2)}$ is not solved with
respect to $\displaystyle\frac{\d V(t)}{\d t}$. Moreover, when written in local coordinates,
it appears as a system of $r$ differential equations for the $r$ 
components of the map $V:[t_0,t_1]\to{\frak g}$, of which at most $n$ can be independent: 
we have seen indeed that the lift to $Q\times{\frak g}$ of a given parametrized curve 
$\gamma:[t_0,t_1]\to Q$ is determined only up to adddition of an arbitrary map $W:[t_0,t_1]\to{\frak g}$
whose value $W(t)$ at each $t$ belongs to the isotropy Lie subalgebra ${\frak g}_{\gamma(t)}$.
Therefore when $r>n$, equations $\hbox{\bf (E-P2)}$ and $\hbox{\bf (CC)}$ form an \emph{under-determined} 
system of differential equations for the pair of unknown maps $t\mapsto\bigl(\gamma(t),V(t)\bigr)$, 
partially in implicit form.
}
\end{rmk}

\begin{rmk} {\rm Let us assume now that the Lagrangian $L$ depends on time, \emph{i.e.}, is a smooth function $L$ defined on the product $\RR\times TQ$, the variable $t\in \RR$ being the time. The other assumptions being unchanged, it is easy to see that the Euler-Poncar\'e equation remains valid, its proof being essentially the same as that given above. Of course,
$\overline L$ is now defined as
 $$\overline L(t,x,X)=L\bigl(t,\varphi(x,X)\bigr)\,,\quad 
  t\in\RR\,,\ x\in Q\,,\ X\in{\frak g}\,,
 $$
the Legendre map ${\mathcal L}$ is now defined on $\RR\times TQ$ and takes its values in 
$T^*Q$, and the Euler-Poincar\'e equation's expression becomes}
 $$\left(\frac{\d }{\d t}-\ad^*_{V(t)}\right)\Bigl(J\circ{\mathcal L}
    \bigl(t,\varphi\big(\gamma(t),V(t)\bigr)\Bigr)
   =J\circ \d_Q\overline L\big(t,\gamma(t),V(t)\bigr)\,.\eqno\hbox{\bf (E-P3)}
 $$
\end{rmk}

\subsection{Example: the spherical pendulum}\label{sphericalpendulum}
Let us consider an heavy material point of mass $m$ constrained, by an ideal constraint, on the surface of a 
sphere $Q$ of centre $O$ and radius $R$, embedded in the physical space $E$. As in Subsection~\ref{rigidbody}, once units of length and time are chosen we may consider the physical space $E$ as an oriented Euclidean 
three-dimensional vector space, with $O$ as origin. The Lie group $\SO(E)$ acts on $E$ on the left by an action which maps the sphere $Q$ onto itself, therefore acts on $Q$. Using the Euclidean scalar product of 
vectors in $E$, we can identify
a point of $Q$ with a vector $\vect x\in E$ of length $R$, and a vector tangent at $\vect x$ to the sphere $Q$
with a pair $(\vect x, \vect v)$ of vectors in $E$ satisfying
 $$\vect x.\vect x=R^2\,,\quad \vect x.\vect v=0\,.$$ 
The choice of an orientation of $E$ allows us to identify the Lie algebra ${\frak g}=\frak{so}(E)$ 
with the vector space $E$ itself, the bracket of elements in
${\frak g}$ being expressed by the vector product of the corresponding vectors in $E$. The map $\varphi:Q\times{\frak g}\to TQ$ can be expressed as
 $$\varphi(\vect x, \vect\Omega)=\vect\Omega_Q(\vect x)=\vect\Omega \times\vect x\,.$$  
The Euclidean scalar product allows us to identify the tangent bundle $T^*Q$ with the cotangent bundle $T^*Q$
and the Lie algebra $\frak{g}$ with its dual $\frak{g}^*$. We may even consider a pair
$(\vect x,\vect \eta)$ of vectors in $E$, the first one $\vect x$ being of length $R$, 
as the element in $T^*_{\vect x}Q$ which evaluated on the tangent vector $(\vect x,\vect v)$, takes the value 
$\vect\eta.\vect v$. The scalar product $\vect x.\vect\eta$ is not assumed to be zero, but of course the element of $T^*_{\vect x}Q$ defined by $(\vect x,\vect \eta)$ only depends on $\vect x$ and of 
$\displaystyle \vect\eta-\frac{\vect x.\vect\eta}{R^2}\vect x$. 
\par\smallskip

The Lagrangian of the system is
 $$L(\vect x,\vect v)=\frac{m\Vert \vect v\Vert^2}{2}+m\vect g.\vect x\,,$$
where $\vect g$ is the acceleration of gravity (considered as a vertical vector directed downwards). The function
$\overline L=L\circ\varphi:Q\times\frak{g}\to{\mathbb R}$ is therefore
 $$\overline L(\vect x,\vect\Omega)=\frac{mR^2}{2}\left(\Vert\vect\Omega\Vert^2
    -\frac{(\vect\Omega.\vect x)^2}{R^2}\right)\,.$$
By calculating the partial differentials of $\overline L$, we easily see that the Euler-Poincar\'e equation 
$\hbox{\bf (E-P1)}$  becomes 
 $$\frac{d}{dt}\bigl(mR^2\vect\Omega-m(\vect x.\vect\Omega)\vect x\bigr)=m\vect x\times \vect g\,.
 $$
This equation can easily be obtained by much more elementary methods: it expresses the fact that the time derivative of the momentum at the origin is equal to the momentum at that point of the gravity force (since the momentum at the origin of the constraint force exerted on the material point by the surface of the sphere vanishes).
\section{The Euler-Poincar\'e Equation and Reduction}\label{EPR}

Following the remark made by Poincar\'e at the end of his note, let us now assume that the map 
$\overline L:Q\times{\frak g}\to {\mathbb R}$ only depends on its second variable $X\in {\frak g}$. 
In other words, $\overline L$ is assumed to be a function defined on ${\frak g}$. 
Its partial differential with respect to its first variable  $\d_Q\overline L$ 
therefore vanishes, and its partial differential with respect to its second variable
$\d_{\frak g}\overline L$ is its usual differential $\d \overline L$. The Euler-Poincar\'e equation becomes
 $$\left(\frac{\d }{\d t}-\ad^*_{V(t)}\right)\Bigl(\d \overline L\bigl(V(t)\bigr)\Bigr)=0\,.\eqno\hbox{\bf (E-P3)}
 $$
This form of the Euler-Poincar\'e equation, called the \emph{basic Euler-Poincar\'e equation} in \cite{holm}, only contains the unknown map $t\mapsto V(t)$, but is not solved with respect to $\displaystyle\frac{\d V(t)}{\d t}$: it is an \emph{implicit differential equation} for the unknown map $V$. Moreover, we know that when
$\dim{\frak g}>\dim Q$, it is \emph{underdetermined}, since we have seen that $V$ is determined only up to addition of a map whose value, for each $t$, belongs to the isotropy algebra of $\gamma(t)$. 
\par\smallskip

Once a solution $V$ of $\hbox{\bf (E-P3)}$ is found, it can be inserted in the compatibility condition 
$\hbox{\bf (CC)}$ which becomes a differential equation
(in explicit form) for the still unknown map $t\mapsto \gamma(t)$. Solving that equation is sometimes called 
\emph{reconstruction} by modern authors. 
\par\smallskip

We see that when $\overline L$ is a function defined on $\frak g$, the determination of motions of our mechanical system
can be simplified by the use of the Euler-Poincar\'e equation.  Usually, it involves the resolution of an implicit differential equation (the Euler-Lagrange equation) on the $2n$-dimensional manifold $TQ$. Now it can be made in two steps:
first by solving the Euler-Poincar\'e equation $\hbox{\bf (E-P3)}$, which is an implicit differental equation for the unknown
$V$ on the $r$-dimensional vector space $\frak g$; and then by solving the compatibility condition $\hbox{\bf (CC)}$, 
which is an explicit differential equation for the unknown $\gamma$ on the $n$-dimensional manifold $Q$.
This procedure is called \emph{Lagrangian reduction} in \cite{cendra,holm,holmmarsden,ratiu}. In our opinion this name
is inappropriate: we will see in Section~\ref{Ham} that this procedure can be used in the Hamiltonian formalism
as well as in the Lagrangian formalism. 
\par\smallskip

\begin{rmk}{\rm
The assumption that $\overline L=L\circ\varphi$ only depends on its second variable
\emph{does not mean} that the Lagrangian $L$ is invariant with respect to the action on $TQ$ of the Lie algebra
$\frak g$ lifted from its action on $Q$. One can prove indeed that the Lie derivative of 
$L$ with respect to the fundamental vector field on $TQ$ associated to a given element $X\in{\frak g}$ 
\emph{generally does not vanish}. Even when the Lie algebra action of ${\frak g}$ 
on the manifold $Q$ comes from the action of a Lie group $G$ on that manifold, the Lagrangian $L$ 
\emph{generally is not constant} on each orbit of the action of $G$ on $TQ$ lifted from its action on $Q$.
The true meaning of the assumption that $\overline L$ is a function defined on 
$\frak g$ is given by the following Lemma.
}
\end{rmk}

\begin{lemma}\label{stat1}
The map $\overline L=L\circ\varphi:Q\times{\frak g}\to {\mathbb R}$ is a 
function defined on $\frak g$ only if and only if, for each $X\in{\frak g}$, the Lagrangian $L$  is constant 
on the image (sometimes improperly called the graph) $X_Q(Q)$ of the fundamental vector field $X_Q$. 
Moreover, when this condition is satisfied, the momentum map $J$ is constant on the 
image ${\mathcal L}\circ X_Q(Q)$ of the map $\mathcal L\circ X_Q$. 
\end{lemma}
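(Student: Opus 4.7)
The proof is essentially an unwinding of the definition of $\overline L$ together with the factorization $\d_2\overline L = J\circ{\mathcal L}\circ\varphi$ already proved in Section~\ref{LegMom}. Let me outline the steps.

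First, for the equivalence. Writing $\overline L(x,X) = L\bigl(\varphi(x,X)\bigr) = L\bigl(X_Q(x)\bigr)$, the statement ``$\overline L$ depends only on its second variable'' unfolds to: for every fixed $X\in{\frak g}$, the function $x\mapsto L\bigl(X_Q(x)\bigr)$ is constant on $Q$. But this is exactly the statement that, for every $X\in{\frak g}$, $L$ takes the same value on all points of the set $X_Q(Q)=\{X_Q(x):x\in Q\}\subset TQ$. The converse is the same computation read from right to left, so the first assertion is immediate. I would present this as a single short paragraph, being a little careful to note that, even though different $x,x'\in Q$ could give $X_Q(x)=X_Q(x')$ (in which case equality of $L$-values is automatic), the condition is still genuinely about $L$ being constant on the \emph{image} of $X_Q$ as a subset of $TQ$.

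For the second assertion, I would simply invoke the identity
$$\d_2\overline L = J\circ{\mathcal L}\circ\varphi$$
established just above the statement of the lemma. When $\overline L$ is a function only of its second variable, it coincides with a function on ${\frak g}$ alone, so its partial differential with respect to the second variable reduces to the ordinary differential $\d\overline L(X)$ and is in particular independent of $x\in Q$. Evaluating the identity at a pair $(x,X)$, and remembering that $\varphi(x,X)=X_Q(x)$, gives
$$J\bigl({\mathcal L}(X_Q(x))\bigr) = \d_2\overline L(x,X) = \d\overline L(X)\,,$$
which shows that the left-hand side is constant as $x$ varies over $Q$ with $X$ fixed. Hence $J$ takes a single value on the image ${\mathcal L}\circ X_Q(Q)$.

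There is no real obstacle: once the earlier formula $\d_2\overline L=J\circ{\mathcal L}\circ\varphi$ is available, the lemma is a one-line consequence. The only point requiring a little care is the logical reading of the first statement (the slightly awkward ``only if and only if''), where one should make explicit that independence of $x$ of the composite $L\circ X_Q$ means constancy of $L$ on the image set $X_Q(Q)\subset TQ$, rather than, say, invariance of $L$ under some group action on $TQ$. As the remark immediately preceding the lemma emphasizes, this latter stronger property generally fails, so I would insert a short sentence making that distinction precise before moving on.
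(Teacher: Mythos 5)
Your proposal is correct and follows essentially the same route as the paper: unwinding $\overline L(x,X)=L\bigl(X_Q(x)\bigr)$ for the equivalence, and then using the identity $\d_2\overline L=J\circ{\mathcal L}\circ\varphi$ together with $\d_2\overline L(x,X)=\d\overline L(X)$ for the constancy of $J$ on ${\mathcal L}\circ X_Q(Q)$. The only cosmetic difference is that the paper pairs the identity with an arbitrary $Y\in{\frak g}$ before concluding, whereas you state it directly as an equality of elements of ${\frak g}^*$; these are equivalent.
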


\begin{proof}
For any given $x\in Q$ and $X\in{\frak g}$
 $$\overline L(x,X)=L\circ\varphi(x,X)=L\bigl(X_Q(x)\bigr)\,,$$
which proves that $\overline L$ is a function defined on $\frak g$ only if and only if for each
$X\in{\frak g}$ $L$ is constant on the submanifold $X_Q(Q)$ of $TQ$. Moreover when this condition is satisfied,
for all $x\in Q$, $X$ and $Y\in{\frak g}$,
  $$\bigl\langle J\circ{\mathcal L}\circ X_Q(x),Y\bigr\rangle=\bigl\langle \d_{\frak g}\overline L(x,X),Y\bigr\rangle
     =\bigl\langle \d \overline L(X),Y\bigr\rangle\,,$$
which proves that for each $Y\in {\frak g }$, $\langle J,Y\rangle$ is constant on the subset ${\mathcal L}\circ X_Q(Q)$ of $T^*Q$,
which means that $J$ itself is constant on that subset.
\end{proof}

\begin{rmk}{\rm
When the dimension of the Lie algebra $\mathcal G$ is strictly larger than the dimension of the configuration space $Q$, very strong restrictions limit the applicability of Lagrangian reduction. Let us consider for example
a mechanical system whose configuration space $Q$ is a sphere of radius $R$ embedded in the physical space $E$, as in Subsection \ref{sphericalpendulum}. The only Lagrangians $L$ on $TQ$ which are such that $\overline L=L\circ\varphi$ is a function defined on $\frak{g}$ are constants. We have seen indeed that the Lie algebra $\frak g$ can be identified with the Euclidean vector space $E$. Let $\vect\Omega_1$ and $\vect\Omega_2$ be two distinct elements of $\frak g$, and let 
$\displaystyle \vect x=\frac{R(\vect\Omega_2-\vect\Omega_1)}{\Vert\vect\Omega_2-\vect\Omega_1\Vert}$.
The vector $\vect x$, which can be considered as a point on the sphere $Q$, is such that 
$\vect\Omega_2-\vect\Omega_1$ lies in its isotropy subalgebra, since that vector is normal to the plane tangent at $\vect x$ to the sphere $Q$. Therefore if we assume that 
$\overline L(\vect x,\vect\Omega)$ only depends on $\vect\Omega$, not on $\vect x$, we must have
$\overline L(\vect\Omega_2)=\overline L(\vect\Omega_1)$, and we see that the Lagrangian $L$ must be a constant.
} 
\end{rmk}

\section{The Euler-Poincar\'e Equation in Hamiltonian Formalism}\label{Ham}

\subsection{The Hamiltonian.}
No special assumption was made until now about the regularity of the Lagrangian $L$. 
Now we assume that $L$ is \emph{hyperregular}, 
which means that the Legendre map $\mathcal L$ is a diffeomorphism of $TQ$ onto the phase space $T^*Q$.
We can then define a smooth function $H:T^*Q\to {\mathbb R}$, called the
\emph{Hamiltonian}, given by
  $$H(\xi)=\bigl\langle\xi,{\mathcal L}^{-1}(\xi)\bigr\rangle - L\bigl({\mathcal L}^{-1}(\xi)\bigr)\,,
    \quad \xi\in T^*Q\,.$$

\subsection{Lagrangian, Hamiltonian and Euler-Poincar\'e Formalisms.}
The \emph{La\-gran\-gian formalism} is the mathematical description of motions of our me\-cha\-ni\-cal system as smooth 
parametrized curves $\gamma:[t_0,t_1]\to Q$ at which the action functional 
 $$I_L(\gamma))=\int_{t_0}^{t_1}L\left(\frac{\d\gamma(t)}{\d t}\right)\,\d t$$
is stationary with respect to variations of $\gamma$ with fixed endpoints. As we have seen in Section 2, Poincar\'e has proven that
the Lagrangian formalism is equivalent to the \emph{Euler-Poincar\'e formalism}, that means the mathematical 
description of motions as smooth parametrized curves $(\gamma,V):[t_0,t_1]\to Q\times{\frak g}$ which satisfy
the Euler-Poincar\'e equation $\hbox{\bf (E-P1)}$ and the compatibility condition $\hbox{\bf (CC)}$. 
\par\smallskip

The \emph{Hamiltonian formalism} is the mathematical description of motions of our mechanical system as smooth parametrized curves
$\zeta:[t_0,t_1]\to T^*Q$ which satisfy the \emph{Hamilton equation}, \emph{i.e.}, the differential equation associated to the Hamiltonian vector field  ${\mathcal X}_H$,
 $$\frac{\d \zeta(t)}{\d t}={\mathcal X}_H\bigl(\zeta(t)\bigr)\,.\eqno\hbox{\bf (H)}$$
Since $L$ is assumed to be hyperregular, the Lagrangian formalism and the Hamiltonian formalisms are equivalent. Therefore, the Euler-Poincar\'e formalism too is equivalent to the Hamiltonian formalism.
\par\smallskip

When $\dim{\frak g}=\dim Q$, the equivalence between the Euler-Poincar\'e and the Hamiltonian formalisms is easily understood,
since the vector bundle homomorphism $\varphi:Q\times{\frak g}\to TQ$ is an isomorphism; its transpose
$\varphi^T=(\pi_Q,J):T^*Q\to Q\times{\frak g}^*$ too is an isomorphism. The Euler-Poincar\'e equation can be written
on $Q\times{\frak g}^*$, and appears then as the image by the isomorphism $(\pi_Q,J)$ of the Hamilton equation on $T^*Q$.
\par\smallskip

Things are more complicated when $\dim{\frak g}>\dim Q$. 
The vector bundles homomorphism $\varphi:Q\times{\frak g}\to TQ$ 
is surjective but no more injective: its kernel is the vector sub-bundle 
of $Q\times{\frak g}$ whose fibre over each point $x\in Q$ is the
isotropy sub-algebra ${\frak g}_x$ of that point. Therefore its transpose
$\varphi^T=(\pi_Q,J):T^*Q\to Q\times{\frak g}^*$ is
an injective, but no more surjective  vector bundles homomorphism. Its image
is the vector sub-bundle $\Upsilon$ of $Q\times{\frak g}^*$ whose fibre over each point $x\in Q$ 
is the annihilator $({\frak g}_x)^0$ of ${\frak g}_x$. The dimension of $\Upsilon$ is $2n$. 
We can choose a vector sub-bundle of $Q\times{\frak g}$ whose fibre over each point $x\in Q$ 
is a vector subspace of $\frak g$ complementary to the isotropy
Lie sub-algebra ${\frak g}_x$, for example by choosing a symmetric 
positive definite bilinear form on $\frak g$, and
taking for fibre over each $x\in Q$ the orthogonal of 
${\frak g}_x$ with respect to that form. The total space 
$\Gamma$  of that sub-bundle is a $2n$-dimensional manifold. The map
 $$(\pi_Q,J)\circ{\mathcal L}\circ\varphi:Q\times{\frak g}\to Q\times{\frak g}^*$$
restricted to $\Gamma$ is now a diffeomorphism of $\Gamma$ onto its image $\Upsilon$.
The Euler-Poincar\'e equation can be written on $\Upsilon$, and appears then as the image by
$(\pi_Q,J)$ (considered as taking its values in $\Upsilon$) of the Hamilton equation on $T^*Q$.

\section{Euler-Poincar\'e Reduction in the Hamiltonian Formalism.}\label{EPRH}
The Lagrangian $L$ is still assumed to be hyperregular, and in addition such that $\overline L=L\circ\varphi$
is a function of its second variable only, that means a function defined on $\frak g$. As in Section~\ref{EPR}, we therefore
have $\d_Q\overline L=0$ and $\d_{\frak g}\overline L=\d \overline L$. We know by Lemma~\ref{stat1} that for each $X\in{\frak g}$
the Lagrangian $L$ is constant on the submanifold $X_Q(Q)$ of $TQ$, and the momentum map $J$ is constant on the
submanifold ${\mathcal L}\circ X_Q(Q)$ of $T^*Q$. The next Lemma shows that
the Hamiltonian has a similar invariance property.

\begin{lemma}\label{stat2}
When the Lagrangian $L$ is assumed to be hyperregular and such that $\overline L$ is a function 
defined on $\frak g$
only, for each $X\in{\frak g}$ the Hamiltonian $H$ is constant on the submanifold ${\mathcal L}\circ X_Q(Q)$ of $T^*Q$. 
\end{lemma}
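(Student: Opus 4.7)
The plan is to compute $H$ explicitly on the submanifold ${\mathcal L}\circ X_Q(Q)$ and exhibit both terms in the Legendre transform as independent of the base point $x\in Q$.

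First, I would fix $X\in{\frak g}$ and pick an arbitrary point of ${\mathcal L}\circ X_Q(Q)$, namely $\xi={\mathcal L}\bigl(X_Q(x)\bigr)$ for some $x\in Q$. Since $\mathcal L$ is a diffeomorphism, ${\mathcal L}^{-1}(\xi)=X_Q(x)$, so the definition of $H$ gives
$$H(\xi)=\bigl\langle{\mathcal L}\bigl(X_Q(x)\bigr),X_Q(x)\bigr\rangle-L\bigl(X_Q(x)\bigr).$$
The plan is then to show that each of the two terms on the right depends only on $X$, not on $x$.

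The Lagrangian term is immediate: by definition $L\bigl(X_Q(x)\bigr)=L\circ\varphi(x,X)=\overline L(x,X)$, and by hypothesis $\overline L$ is a function of $X$ alone, hence this term equals $\overline L(X)$, which is constant as $x$ varies.

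For the pairing term, I would invoke the definition of the momentum map: $\langle J(\xi),X\rangle=\bigl\langle\xi,X_Q\circ\pi_Q(\xi)\bigr\rangle$. Applied to $\xi={\mathcal L}\bigl(X_Q(x)\bigr)$, whose projection is $\pi_Q(\xi)=x$, this gives
$$\bigl\langle{\mathcal L}\bigl(X_Q(x)\bigr),X_Q(x)\bigr\rangle=\bigl\langle J\circ{\mathcal L}\circ X_Q(x),X\bigr\rangle.$$
By Lemma \ref{stat1}, the assumption that $\overline L$ depends only on $X$ implies that $J$ is constant on ${\mathcal L}\circ X_Q(Q)$; so $J\circ{\mathcal L}\circ X_Q(x)$ equals some fixed $\mu_X\in{\frak g}^*$ independent of $x$, and the pairing reduces to the constant $\langle\mu_X,X\rangle$.

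Combining the two parts, $H(\xi)=\langle\mu_X,X\rangle-\overline L(X)$ is independent of $x$, which proves the claim. I do not anticipate a real obstacle: the whole argument is a one-line calculation once Lemma \ref{stat1} is in hand and the definition of the momentum map is unfolded. The only subtlety worth emphasising in the write-up is that $\mathcal L$ being a bijection is needed only to make sense of $H$ and of $\mathcal L^{-1}(\xi)=X_Q(x)$; after that, hyperregularity plays no further role.
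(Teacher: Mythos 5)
Your proposal is correct and follows essentially the same route as the paper: both split $H\bigl({\mathcal L}\circ X_Q(x)\bigr)$ into the pairing term and the Lagrangian term and show each is independent of $x$, the paper identifying the pairing directly as $\bigl\langle \d\overline L(X),X\bigr\rangle$ via the identity $\d_2\overline L=J\circ{\mathcal L}\circ\varphi$, while you reach the same conclusion by citing the second assertion of Lemma~\ref{stat1}. The two are the same computation packaged slightly differently, and your remark about where hyperregularity is actually used is accurate.
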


\begin{proof}
For a given $X\in{\frak g}$ and alla $x\in Q$, we have
 $$H\circ{\mathcal L}\circ X_Q(x)=\bigl\langle{\mathcal L}\circ X_Q(y), X_Q(y)\bigr\rangle-L\circ X_Q(y)
=\bigl\langle \d \overline L(X),X\bigr\rangle -\overline L(X)\,,$$
which proves that $H$ is constant on the submanifold ${\mathcal L}\circ X_Q(Q)$ of $T^*Q$. 
\end{proof}

\subsection{The Euler-Poincar\'e Equation on ${\frak g}^*$.} 
With the assumptions made in this section, the Euler-Poincar\'e and the Hamiltonian formalisms are equivalent; therefore
a smooth parametrized curve $(\gamma,V):[t_0,t_1]\to Q\times{\frak g}$ which satisfies the compatibility condition
$\hbox{\bf (CC)}$ also satisfies the Euler-Poincar\'e equation $\hbox{\bf (E-P1)}$ if and only if the parametrized curve
$\zeta={\mathcal L}\circ\varphi\circ(\gamma,V):[t_0,t_1]\to T^*Q$ satisfies the Hamilton equation $(\hbox{\bf H})$.
The Euler-Poincar\'e equation becomes  
$$\left(\frac{\d }{\d t}-\ad^*_{V(t)}\right)\Bigl(J\circ\zeta(t)\Bigr)
 =0\,.\eqno\hbox{\bf (E-P4)}$$
This equation shows that when $(\gamma,V)$ satisfies $\hbox{\bf (CC)}$ and $\hbox{\bf (E-P1)}$, the parametrized curve
$\xi=J\circ\zeta=J\circ{\mathcal L}\circ\varphi\circ(\gamma,V)$ takes its value in a coadjoint orbit of ${\frak g}^*$. 
One may wish to consider it as a differential equation in ${\frak g}^*$ for the unknown parametrized
curve $\xi:[t_0,t_1]\to{\frak g}^*$. But there are at least two difficulties.
\par
\begin{enumerate}
\item{} 
The term $\ad^*_{V(t)}$ depends of $V(t)$, which is an element in ${\frak g}$ whose dependence
on $\xi(t)=J\circ\zeta(t)$ is complicated. Of course, we can write 
 $$\gamma(t)=\pi_Q\circ\zeta(t)\quad\hbox{and}
   \quad \bigl(\gamma(t),V(t)\bigr)\in \varphi^{-1}\circ{\mathcal L}^{-1}\bigl(\zeta(t)\bigr)\,,$$
which proves that when $\zeta(t)$ is known, $V(t)$ is determined up to an element in the isotropy Lie algebra
${\frak g}_{\pi_Q\bigl(\zeta(t)\bigr)}$. Still, $V(t)$ is not fully determined by $\xi(t)$.

\item{}
When $r=\dim{\frak g}$ is strictly larger than $n=\dim Q$, for each $x\in Q$ the map $J:T^*Q\to{\frak g}^*$
restricted to $T^*_xQ$ is injective, but not surjective: its image is the annihilator $({\frak g}_x)^0$ 
of the isotropy Lie algebra ${\frak g}_x$. Therefore Equation $\hbox{\bf (E-P4)}$ 
may not be well defined on the whole vector space ${\frak g}^*$. 
\end{enumerate} 
\par\smallskip

However, let us recall that ${\frak g}^*$ has a natural Poisson structure, called the 
\emph{Kirillov-Kostant-Souriau structure}, which allows to associate to any smooth function 
$h:{\frak g}^*\to{\mathbb R}$ its \emph{Hamiltonian vector field} ${\mathcal X}_h$, whose expression is
$${\mathcal X}_h(\xi)=-\ad^*_{dh(\xi)}\xi\,,\quad \xi\in{\frak g}^*\,.
 $$
Moreover, the momentum map $J:T^*Q\to {\frak g}^*$ is a Poisson map when
$T^*Q$ is endowed with the Poisson structure associated to its symplectic structure, and ${\frak g}^*$ with its
Kirillov-Kostant-Souriau Poisson structure~(\cite{lima}, chapter IV proposition 5.2). 
Therefore, if there exists a smooth function $h:{\frak g}^*\to {\mathbb R}$ such that $H=h\circ J$, 
the parametrized curve $\xi=J\circ\zeta:[t_0,t_1]\to{\frak g}^*$
satisfies the Hamilton differential equation on ${\frak g}^*$
 $$\frac{\d \xi(t)}{\d t}= -\ad^*_{dh\bigl(\xi(t)\bigr)}\bigl(\xi(t)\bigr)\,. \eqno\hbox{\bf(E-P5)}$$
It is Equation~$\hbox{\bf(E-P4)}$ with $V(t)=-dh\bigl(\xi(t)\bigr)$. We see therefore that 
Equation~$\hbox{\bf(E-P4)}$ becomes a well-defined differential equation on ${\frak g}^*$, containing no
unknown other than the parametrized curve $\xi=J\circ\zeta:[t_0,t_1]\to{\frak g}^*$, 
\emph{if and only if the Hamiltonian $H:T^*Q\to {\mathbb R}$ is a map obtained by composition of the momentum map 
$J:T^*Q\to{\frak g}^*$} with a smooth map $h:{\frak g}^*\to {\mathbb R}$.   
While the better known Marsden-Weinstein reduction procedure~\cite{marsdenweinstein} is used when the momentum map is a first integral of the Hamilton equation $\hbox{\bf (H)}$, in the Euler-Poincar\'e reduction procedure the momentum map $J$ need not be a first integral, but a different invariance property is needed: the Hamiltonian must be a map obtained by composition of the momentum map with a smooth map $h:{\frak g}^*\to {\mathbb R}$.
These two different reduction procedures are well known for Hamiltonian systems~(see for example \cite{lima} chapter IV section 6.11, or \cite{marle} last remark in Section~2). The invariance properties used by these two different reduction procedure
are related by the fact that, for each $\zeta\in T^*Q$, each of the the two vector subspaces $\ker T_\zeta J$ 
and $T_\zeta{\mathcal O}_\zeta$ of the tangent space $T_\zeta(T^*Q)$ is the symplectic orthogonal of the other. 
We have denoted by $T_\zeta{\mathcal O}_\zeta$ the tangent space at $\zeta$ to the orbit of $\zeta$ under 
the action of $\frak g$ on $T^*Q$, that means the vector subspace of $T_\zeta(T^*Q)$ made by the values at 
$\zeta$ of the fundamental vector fields $X_{T^*Q }$, for all $X\in{\frak g}$.

\section{Systems whose Configuration Space is a Lie Group}\label{confspaceliegroup}
In this section we assume that
$Q$ is a connected Lie group $G$ whose Lie algebra
(identified with the tangent space to $G$ at the neutral element) is
$\frak g$, and that the Lie algebra action $\psi$ is the map which associates to each 
$X\in {\frak g}$ the \emph{right invariant} vector field $X_G^R$ on $Q\equiv G$
whose value at the neutral element is $X$. Therefore 
$n=\dim Q=\dim{\frak g}=r$.
\par\smallskip
  
Let us first recall some well known results about the actions of a Lie group on itself and their lifts to the tangent and cotangent bundles.

\subsection{Actions of a Lie Group on itself on the Right and on the Left.}
For each $g\in G$, we denote by $R_g:G\to G$ and $L_g:G\to G$ the left and right translations
 $$R_g(x)=xg\,,\quad L_g(x)=gx\,,\quad x\in G\,,$$
and by $TR_g:TG\to TG$ and $TL_g:TG\to TG$ their prolongations to vectors.
\par\smallskip

Observe that $X_G^R$ is the fundamental vector field
associated to $X$ for the action of $G$ on itself by translations \emph{on the left} 
 $$\Phi^L:G\times G\to G\,,\quad \Phi^L(g,x)=gx\,,$$
not on the right, since
 $$X_G^R(x)=\frac{\d \bigl(\exp(tX)x\bigr)}{\d t}\Bigm|_{t=0}=TR_x(X)\,,\quad X\in{\frak g}\,,\quad x\in G\,.$$
 The vector bundles morphism $\varphi:G\times{\frak g}\to TG$ is now an isomorphism, given by 
 $$\varphi(x,X)=X^R_G(x)=TR_x(X)\,,\quad x\in G\,,\quad X\in {\frak g}\,.$$
To define the lift to the tangent bundle $TG$ of the action $\Phi^L$, we take for 
each $g\in G$ the prolongation to vectors of the diffeomorphism $L_g:G\to G$, $x\mapsto L_g(x)=gx$.
The obtained action of $G$ on $TG$, denoted by $\overline\Phi^L:G\times TG\to TG$, is given by
 $$\overline\Phi^L(g,v)=TL_g(v)\,,\quad g\in G\,,\quad v\in TG\,.$$
The lift to the cotangent bundle $T^*G$ of the action $\Phi^L$, denoted by $\widehat \Phi^L$, is the contragredient of
$\overline\Phi^L$, with a change of sign (to obtain an action on the left):
 $$\widehat\Phi^L(g,\zeta)=(TL_{g^{-1}})^t(\zeta)\,,\quad g\in G\,,\quad \zeta\in T^*G\,.$$ 
We have denoted by $(TL_{g^{-1}})^t:T^*G\to T^*G$ the transpose of the linear vector bundles isomorphism 
$TL_g:TG\to TG$.
\par\smallskip  

The action $\widehat \Phi^L:G\times T^*G\to T^*G$ is Hamiltonian (see for example \cite{lima}, chapter IV, theorem 4.6), and has as an $\Ad^*$-invariant momentum map
 $$J^L:T^*G\to{\frak g}^*\,,\quad J^L(\zeta)=\bigl(TR_{\pi_G(\zeta)}\bigr)^t(\zeta)\,,\quad \zeta\in T^*G\,.$$

Let us now consider the action of the Lie group $G$ on itself by translations \emph{on the right}
 $$\Phi^R:G\times G\to G\,,\quad \Phi^R(x,g)=xg\,.$$
For this action, the fundamental vector field associated to each $X\in{\frak g}$ is the \emph{left invariant}
vector field $X_G^L$ on $G$ whose value at the neutral element is $X$. The lift to $TG$ and to $T^*G$ of the action $\Phi^R$,
denoted respectively $\overline\Phi^R:TG\times G\to TG$ and $\widehat\Phi^R:T^*G\times G\to T^*G$, are given by
the formulae, in which $v\in TG$, $\zeta\in T^*G$, $g\in G$,
 $$\overline\Phi^R(v,g)=TR_g(v)\,,\quad \widehat\Phi^R(\zeta,g)=(TR_{g^{-1}})^t(\zeta)\,.$$
The action $\widehat \Phi^R$ is Hamiltonian and admits as an $\Ad^*$-invarant momentum map
 $$J^R(\zeta)=\bigl(TL_{\pi_G(\zeta)}\bigr)^t(\zeta)\,,\quad \zeta\in T^*G\,.$$

\begin{proposition}\label{stat3}
A smooth Hamiltonian $H:T^*G\to{\mathbb R}$ can be written as $H=h\circ J^L$, 
where $h:{\frak g}^*\to{\mathbb R}$ is a smooth map, if and only if
$H$ remains invariant by the action $\widehat \Phi^R$. When the Hamiltonian $H$ comes from a hyperregular smooth
Lagrangian $L:TG\to {\mathbb R}$, $H$ can be written as $H=h\circ J^L$ if and only if 
the Lagrangian $L$ is such that the function
 $$\overline L=L\circ\varphi:G\times{\frak g}^*\to{\mathbb R}\,,\quad
   (x,X)\mapsto \overline L(x,X)=L\bigl(TR_x(X)\bigr)$$ 
(with $x\in G$, $X\in{\frak g}$) is a function defined on ${\frak g}^*$, or if and only if $L$ remains invariant by the action $\overline\Phi^R$.
\end{proposition}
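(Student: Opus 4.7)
The plan is to establish the chain of equivalences in three steps: first the general statement that smooth $\widehat\Phi^R$-invariant functions on $T^*G$ are exactly those of the form $h\circ J^L$, independently of any Lagrangian; then the direct equivalence between the two Lagrangian-side conditions; and finally the link between Lagrangian invariance and Hamiltonian invariance through the Legendre map.

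For the first equivalence, I would use the right trivialization
$$\Theta : T^*G \longrightarrow G\times {\frak g}^*,\qquad \zeta \longmapsto \bigl(\pi_G(\zeta),\,J^L(\zeta)\bigr),$$
which is a diffeomorphism (smooth, bijective, linear on the fibres of $\pi_G$). The computation $(TR_{xg})^t\circ(TR_{g^{-1}})^t = (TR_{g^{-1}}\circ TR_{xg})^t = (TR_x)^t$ gives $J^L\bigl(\widehat\Phi^R(\zeta,g)\bigr)=J^L(\zeta)$, so in the trivialization $\widehat\Phi^R$ reads $(x,\xi)\mapsto(xg,\xi)$. This action is free and proper, its orbits are precisely the fibres of $J^L$, and the quotient $T^*G/G$ is smoothly identified with ${\frak g}^*$ via $J^L$. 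Consequently a smooth $H$ descends to a smooth $h$ on ${\frak g}^*$ if and only if it is $\widehat\Phi^R$-invariant, if and only if $H=h\circ J^L$.

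For the Lagrangian side, the $\overline\Phi^R$-orbit of $v=TR_x(X)\in T_xG$ is
$$\{TR_{xg}(X): g\in G\}=X^R_G(G)=\varphi\bigl(G\times\{X\}\bigr),$$
so $L$ is $\overline\Phi^R$-invariant if and only if $L$ is constant on each graph $X^R_G(G)$, if and only if $\overline L(x,X)=L(TR_x(X))$ is independent of $x$; this disposes of the equivalence between the two Lagrangian-side statements. To connect them with the Hamiltonian side, I would verify that when $L$ is $\overline\Phi^R$-invariant the Legendre map intertwines the two lifted actions, i.e.\ $\mathcal L\circ\overline\Phi^R_g = \widehat\Phi^R_g\circ\mathcal L$. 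This comes from differentiating $L\circ TR_g=L$ in the vertical direction and using the fibrewise linearity of $TR_g$: for $v,w\in T_xG$ one finds $\langle\mathcal L(TR_g v),TR_g w\rangle = \langle\mathcal L(v),w\rangle$, which is exactly the equivariance. Substituting it into $H(\zeta) = \langle\zeta,\mathcal L^{-1}(\zeta)\rangle - L(\mathcal L^{-1}(\zeta))$ and using the pairing identity $\langle(TR_{g^{-1}})^t\zeta,TR_g(v)\rangle=\langle\zeta,v\rangle$ yields $H\circ\widehat\Phi^R_g=H$. The reverse implication is obtained by running the same argument with the inverse Legendre map, which exists because $L$ is hyperregular, so that $L$ in turn is recovered from $H$ by the same vertical-differential construction. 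Combined with the first step this closes the chain of equivalences.

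The main obstacle is precisely this equivariance of the Legendre map under the two right actions; once it is set up carefully, everything else reduces to bookkeeping with linear algebra on the fibres of $TG$ and $T^*G$. Particular care will be required with the left/right conventions and with tracking $g$ versus $g^{-1}$ in the transposition of tangent maps, since the proposition deliberately mixes left- and right-invariance and both $J^L$ and $\varphi$ are built from $TR$-translations rather than $TL$-translations.
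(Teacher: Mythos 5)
Your proposal is correct and follows essentially the same route as the paper: identify the orbits of $\widehat\Phi^R$ with the level sets of the surjective submersion $J^L$ to get the first equivalence, and pass between $L$- and $H$-invariance through the Legendre transform for the second. The only difference is one of detail — the paper dispatches the last step in a single sentence (\lq\lq the relationship between a hyperregular Lagrangian and the corresponding Hamiltonian shows\ldots\rq\rq), whereas you actually verify the equivariance $\mathcal L\circ\overline\Phi^R_g=\widehat\Phi^R_g\circ\mathcal L$ and substitute it into the formula for $H$, which is a welcome filling-in rather than a different argument.
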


\begin{proof}
The above given expressions
of the actions $\widehat\Phi^L$ and $\widehat\Phi^R$ and of their momentum maps 
$J^L$ and $J^R$ prove that the level sets $(J^L)^{-1}(\xi)$ of the map $J^L$, for all $\xi\in{\frak g}^*$,
are the orbits of the action $\widehat\Phi^R$ (and the level sets of 
$J^R$ are the orbits of the action $\widehat\Phi^L$). Therefore they are $n$-dimensional smooth submanifolds of $T^*G$ diffeomorphic to $G$. Since $J^L:T^*G\to{\frak g}^*$ is a surjective submersion,
the Hamiltonian $H$ can be written as $H=h\circ J^L$, where $h:{\frak g}^*\to{\mathbb R}$ is a smooth map, if and only if
it takes a constant value on each level set of $J^L$, \emph{i.e.}, on each orbit of $\Phi^R$, in other words if and only if
$H$ remains invariant by the action $\widehat \Phi^R$. The relationship between a hyperregular Lagrangian $L$ and the corresponding Hamiltonian $H$ shows that the invariance of $H$ by the action $\widehat\Phi^R$ is equivalent to the invariance of $L$ by the action $\overline\Phi^R$.  
\end{proof}

\begin{rmk}{\rm
When the Hamiltonian can be written as $H=h\circ J^L$,  where $h:{\frak g}^*\to{\mathbb R}$ is a smooth function, 
the Euler-Poincar\'e reduction procedure allows the resolution of the Euler-Poincar\'e equation
$\hbox{\bf (E-P5)}$ as a first step to solve the Hamilton equation $\hbox{\bf (H)}$. On the other hand, 
Noether's theorem~(see for example \cite{lima}, Chapter IV, Theorem 2.6) 
asserts that $J^R$ is a first integral of $\hbox{\bf (H)}$, and allows the use of the Marsden-Weinstein 
reduction procedure. We see therefore that the assumptions under which the Euler-Poincar\'e and Marsden-Weinstein 
reduction procedures can be used are the same. The next proposition will allow us to prove that 
these two reduction procedures are equivalent.
}
\end{rmk}

\begin{proposition}\label{stat4}
For each $\zeta\in {\frak g}^*$, $(J^R)^{-1}\bigl(J^R(\zeta)\bigr)$ and $(J^L)^{-1}\bigl(J^L(\zeta)\bigr)$ are two 
smooth submanifolds of $T^*G$, diffeomorphic to $G$, which are the orbits through $\zeta$ of the actions
$\widehat\Phi^L:G\times T^*G\to T^*G$ and $\widehat \Phi^R:T^*G\times G\to T^*G$, respectively. Their intersection
is a smooth isotropic submanifold of the symplectic manifold $(T^*G,\omega_{T^*G})$, which can be written both as
 $$(J^R)^{-1}\bigl(J^R(\zeta)\bigr)\cap (J^L)^{-1}\bigl(J^L(\zeta)\bigr)
   =\bigl\{\,(TL_{g^{-1}})^t(\zeta)\,;\,g\in G_{J^L(\zeta)}\,\bigr\}\,,$$
and as
 $$(J^R)^{-1}\bigl(J^R(\zeta)\bigr)\cap (J^L)^{-1}\bigl(J^L(\zeta)\bigr)
   =\bigl\{\,(TR_{\gamma^{-1}})^t(\zeta)\,;\,\gamma\in G_{J^R(\zeta)}\,\bigr\}\,,$$
where $G_{J^L(\zeta)}$ and $G_{J^R(\zeta)}$ are the isotropy groups, respectively, 
of $J^L(\zeta)$ and of $J^R(\zeta)$ for the coadjoint action of $G$ on ${\frak g}^*$.

Moreover, the tangent space at $\zeta$ to $(J^R)^{-1}\bigl(J^R(\zeta)\bigr)\cap (J^L)^{-1}\bigl(J^L(\zeta)\bigr)$ 
is the kernel of the closed $2$-form induced by $\omega_{T^*G}$ on the submanifolds
$(J^R)^{-1}\bigl(J^R(\zeta)\bigr)$ and $(J^L)^{-1}\bigl(J^L(\zeta)\bigr)$.
\end{proposition}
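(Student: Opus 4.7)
The plan is to exploit the duality between orbits of $\widehat\Phi^L$, $\widehat\Phi^R$ and level sets of $J^R$, $J^L$ already noted in the proof of Proposition \ref{stat3}, combined with the classical symplectic-orthogonality property of a Hamiltonian momentum map.

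First, the identification of the level sets with orbits is essentially contained in Proposition \ref{stat3}: $(J^L)^{-1}\bigl(J^L(\zeta)\bigr)$ is the orbit of $\zeta$ under $\widehat\Phi^R$, and $(J^R)^{-1}\bigl(J^R(\zeta)\bigr)$ is the orbit of $\zeta$ under $\widehat\Phi^L$. Since the translation actions of $G$ on itself are free, so are their lifts to $T^*G$, and each of these two orbits is a smooth $n$-dimensional submanifold of $T^*G$ diffeomorphic to $G$.

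To obtain the two descriptions of the intersection, I would parametrize an arbitrary point of $(J^R)^{-1}\bigl(J^R(\zeta)\bigr)$ as $\widehat\Phi^L(g,\zeta)=(TL_{g^{-1}})^t(\zeta)$ and apply the $\Ad^*$-equivariance of $J^L$ to see that $J^L\bigl((TL_{g^{-1}})^t(\zeta)\bigr)=J^L(\zeta)$ holds exactly when $g$ belongs to the coadjoint isotropy group $G_{J^L(\zeta)}$. The symmetric computation, parametrizing points of $(J^L)^{-1}\bigl(J^L(\zeta)\bigr)$ as $\widehat\Phi^R(\zeta,\gamma)=(TR_{\gamma^{-1}})^t(\zeta)$ and using the $\Ad^*$-equivariance of $J^R$, gives the second expression. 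In particular, the intersection is the orbit of $\zeta$ under the free action of $G_{J^L(\zeta)}$ via $\widehat\Phi^L$ (equivalently of $G_{J^R(\zeta)}$ via $\widehat\Phi^R$), hence is a smooth submanifold diffeomorphic to that isotropy subgroup.

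For isotropy and for identifying the kernel of the induced $2$-form, the crucial input is the classical fact that for a Hamiltonian action with momentum map $J$, at any point $\zeta$ the tangent space to the orbit through $\zeta$ is the symplectic orthogonal of $\ker T_\zeta J$. Applied to $\widehat\Phi^L$, whose orbit through $\zeta$ coincides with $(J^R)^{-1}\bigl(J^R(\zeta)\bigr)$ and hence has tangent space $\ker T_\zeta J^R$, this yields
$$\ker T_\zeta J^R = (\ker T_\zeta J^L)^{\perp},$$
the orthogonal being taken with respect to $\omega_{T^*G}$, and symmetrically with $J^L$ and $J^R$ swapped. Consequently any two vectors of $\ker T_\zeta J^L\cap \ker T_\zeta J^R$ are $\omega_{T^*G}$-orthogonal, giving isotropy; and the same intersection is simultaneously the kernel of $\omega_{T^*G}$ restricted to $\ker T_\zeta J^L$ and to $\ker T_\zeta J^R$, i.e., the kernel of the $2$-form induced on each of the two submanifolds.

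The remaining subtle point, which I expect to be the main technical obstacle, is matching this intersection of kernels with the tangent space at $\zeta$ to the intersection of the two submanifolds. One inclusion is automatic. For the reverse, the concrete description just obtained identifies that tangent space with the infinitesimal orbit $\{X^L_{T^*G}(\zeta):X\in {\frak g}_{J^L(\zeta)}\}$, whose dimension $\dim G_{J^L(\zeta)}=n-\dim{\mathcal O}_{J^L(\zeta)}$ equals, by the standard Marsden-Weinstein dimension count, the dimension of the degeneracy of $\omega_{T^*G}$ restricted to the coisotropic level set $(J^L)^{-1}\bigl(J^L(\zeta)\bigr)$; this forces equality.
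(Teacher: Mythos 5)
Your proposal is correct and follows essentially the same route as the paper: identify the level sets with the orbits of the lifted translation actions, compute the intersection via $\Ad^*$-equivariance of the momentum maps (the paper does one description explicitly and deduces the other from $G_{J^L(\zeta)}=\pi_G(\zeta)G_{J^R(\zeta)}\bigl(\pi_G(\zeta)\bigr)^{-1}$, whereas you do both symmetrically), and then invoke the classical fact that the orbit tangent space and $\ker T_\zeta J$ are mutual symplectic orthogonals. Your final paragraph is in fact slightly more careful than the paper, which tacitly identifies the tangent space of the intersection with the intersection of the tangent spaces; your dimension count (equivalently, the standard lemma $\ker T_\zeta J^L\cap T_\zeta(G\cdot\zeta)=T_\zeta\bigl(G_{J^L(\zeta)}\cdot\zeta\bigr)$) legitimately closes that small gap, the only quibble being that the level set $(J^L)^{-1}\bigl(J^L(\zeta)\bigr)$ is not in general coisotropic, though this does not affect your argument.
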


\begin{proof}
We already know that the level sets of $J^R$ are the orbits of $\widehat\Phi^L$ and that the level sets of
$J^L$ are the level sets of $\widehat\Phi^R$. These actions being free, these level sets are 
smooth submanifolds diffeomorphic to $G$. 
\par\smallskip

The level subset $(J^L)^{-1}\bigl(J^L(\zeta)\bigr)$ is the set of elements $(TR_{\gamma^{-1}})^t(\zeta)$, 
for all $\gamma\in G$. Let us calculate
 $$J^R\bigl((TR_{\gamma^{-1}})^t(\zeta)\bigr)=(TL_{\pi_G(\zeta)\gamma})^t(TR_{\gamma^{-1}})^t(\zeta)
    =\Ad^*_{\gamma^{-1}}\circ J^R(\zeta)\,.$$
Therefore $(TR_{\gamma^{-1}})^t(\zeta)$ belongs to $(J^R)^{-1}\bigl(J^R(\zeta)\bigr)$ 
if and only if $\Ad^*_{\gamma^{-1}}\circ J^R(\zeta)=J^R(\zeta)$, \emph{i.e.}, if and only if $\gamma$ belongs 
to $G_{J^R(\zeta)}$, the isotropy subgroup of $J^R(\zeta)$ for the coadjoint action of $G$ on ${\frak g}^*$. 
We have proved that
 $$(J^R)^{-1}\bigl(J^R(\zeta)\bigr)\cap (J^L)^{-1}\bigl(J^L(\zeta)\bigr)=
   \bigl\{\,(TR_{\gamma^{-1}})^t(\zeta)\,;\,\gamma\in G_{J^R(\zeta)}\,\bigr\}\,.
 $$ 
A similar calculation shows that
 $$J^L(\zeta)=\Ad^*_{\pi_G(\zeta)}\circ J^R(\zeta)\,,$$
from which we deduce that 
 $$G_{J^L(\zeta)}=\pi_G(\zeta)G_{J^R(\zeta)}\bigl(\pi_G(\zeta)\bigr)^{-1}$$
and that
$$(J^R)^{-1}\bigl(J^R(\zeta)\bigr)\cap (J^L)^{-1}\bigl(J^L(\zeta)\bigr)=
   \bigl\{\,(TL_{g^{-1}})^t(\zeta)\,;\ g\in G_{J^L(\zeta)}\,\bigr\}\,.
 $$ 
Finally, let us recall that when a Lie group $G$ acts on a 
symplectic manifold $(M,\omega)$ by a Hamiltonian action which admits a map $J$ as momentum map,
for each point $x\in M$ each of the two vector subspaces of $T_xM$: (i) the tangent space at $x$ 
to the orbit of this point, and (ii) $\ker T_xJ$, is the symplectic orthogonal of the other.
Therefore, for each $\zeta\in T^*M$, each of the tangent spaces at $\zeta$ to the submanifolds
$(J^R)^{-1}\bigl(J^R(\zeta)\bigr)$ and $(J^L)^{-1}\bigl(J^L(\zeta)\bigr)$ 
is the symplectic orthogonal of the other, and their intersection is the kernel at $\zeta$ of the closed 
$2$-forms induced by the canonical symplectic $2$form $\omega_{T^*G}$ on 
the submanifolds $(J^R)^{-1}\bigl(J^R(\zeta)\bigr)$ and $(J^L)^{-1}\bigl(J^L(\zeta)\bigr)$.
\end{proof}

\begin{corol}\label{stat5}
For each $\xi\in{\frak g}^*$, the submanifold $(J^R)^{-1}(\xi)$ is regularly foliated
by its intersections with the submanifolds $(J^L)^{-1}(\eta)$, where $\eta$ runs over ${\frak g}^*$.
The leaves of this foliation are the orbits of the action on $(J^R)^{-1}(\xi)$ of $\widehat\Phi^L$ restricted 
to the isotropy subgroup $G_\xi$ of $G$ (for the coadjoint action of $G$ on ${\frak g}^*)$.
These leaves are the maximal isotropic submanifolds of the symplectic manifold $(T^*G,\omega_{T^*G})$ contained in 
$(J^R)^{-1}(\xi)$. The set of leaves of that foliation is a
smooth manifold $M_\xi$, the canonical projection $p_\xi:(J^R)^{-1}(\xi)\to M_\xi$ is a smooth map, and
$M_\xi$ is  endowed with a symplectic form $\omega_\xi$ whose inverse image $p_\xi^*\omega_\xi$
is the closed $2$-form induced by $\omega_{T^*G}$ on $(J^R)^{-1}(\xi)$.

Moreover, the restriction to $(J^R)^{-1}(\xi)$ of the map $J^L:T^*G\to{\frak g}^*$ induces, by quotient,
a smooth map $J^L_\xi:M_\xi\to{\frak g}^*$, whose image is the coadjoint orbit of $\xi$. Considered as a map
defined on $M_\xi$ with values in the coadjoint orbit of $\xi$, the map $J^L_\xi$ is a symplectic diffeomorphism.
\end{corol}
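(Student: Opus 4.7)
The plan is to derive the corollary from Proposition~\ref{stat4} combined with two standard tools: the reduction of a presymplectic submanifold by the characteristic foliation of its induced closed $2$-form, and the Poisson-map property of $J^L$ with respect to the Kirillov--Kostant--Souriau structure on ${\frak g}^*$.

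First I would identify the leaves and show the foliation is regular. Fix $\zeta\in(J^R)^{-1}(\xi)$ and set $\eta=J^L(\zeta)$. By Proposition~\ref{stat4}, the intersection $(J^R)^{-1}(\xi)\cap(J^L)^{-1}(\eta)$ is realised as an orbit through $\zeta$ of a free, proper action of the closed isotropy subgroup $G_\xi$ (equivalently described, via the formula $G_{J^L(\zeta)}=\pi_G(\zeta)G_\xi\pi_G(\zeta)^{-1}$, as an orbit of $\widehat\Phi^L$ restricted to $G_{J^L(\zeta)}$). These orbits therefore partition $(J^R)^{-1}(\xi)$ into a regular foliation, and the leaf space $M_\xi$ inherits a smooth manifold structure making $p_\xi$ a surjective submersion whose vertical bundle is the tangent distribution of the foliation.

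Next I would construct $\omega_\xi$. Let $\omega_R$ denote the closed $2$-form induced by $\omega_{T^*G}$ on $(J^R)^{-1}(\xi)$. The last clause of Proposition~\ref{stat4} identifies the tangent space of each leaf with the pointwise kernel of $\omega_R$. This yields three things simultaneously: the leaves are isotropic in $(T^*G,\omega_{T^*G})$; they are maximal among isotropic submanifolds contained in $(J^R)^{-1}(\xi)$, since their tangent spaces exhaust the characteristic kernel of $\omega_R$; and $\omega_R$ descends through $p_\xi$ to a unique smooth, closed, nondegenerate $2$-form $\omega_\xi$ on $M_\xi$ satisfying $p_\xi^*\omega_\xi=\omega_R$ (smoothness and nondegeneracy being standard consequences of the kernel distribution being exactly the vertical bundle of $p_\xi$).

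Finally I would analyse $J^L_\xi$. Because $J^L$ is constant on each leaf, its restriction to $(J^R)^{-1}(\xi)$ factors through $p_\xi$ as a smooth map $J^L_\xi:M_\xi\to{\frak g}^*$. Injectivity is immediate: two points of $(J^R)^{-1}(\xi)$ sharing a $J^L$-value lie in the same intersection, hence in the same leaf. Using the equivariance $J^L(\zeta)=\Ad^*_{\pi_G(\zeta)}J^R(\zeta)$ extracted from the proof of Proposition~\ref{stat4}, together with the surjectivity of $\pi_G$ on $(J^R)^{-1}(\xi)$, the image of $J^L_\xi$ is exactly the coadjoint orbit $\mathcal{O}_\xi$ of $\xi$; a dimension count matching $\dim M_\xi=\dim G-\dim G_\xi=\dim\mathcal{O}_\xi$ then promotes the injection to a diffeomorphism onto $\mathcal{O}_\xi$. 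The main obstacle I anticipate is the symplectic part: to show that $J^L_\xi$ pulls the Kirillov--Kostant--Souriau form on $\mathcal{O}_\xi$ back to $\omega_\xi$, I would exploit that $J^L:T^*G\to{\frak g}^*$ is a Poisson map, combine this with the pullback identity $p_\xi^*\omega_\xi=\omega_R$, and reduce the desired identity to evaluating both $2$-forms on Hamiltonian vector fields of functions of the form $f\circ J^L$, where the standard formula for the Kirillov--Kostant--Souriau bracket closes the computation.
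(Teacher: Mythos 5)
Your proposal is correct, and it rests on the same two pillars as the paper's own proof: the identification in Proposition~\ref{stat4} of the characteristic kernel of the induced $2$-form on $(J^R)^{-1}(\xi)$ with the tangent spaces to the $\widehat\Phi^L|_{G_\xi}$-orbits, and the fact that $J^L$ is a Poisson map onto $({\frak g}^*,\hbox{KKS})$ whose symplectic leaves are the coadjoint orbits. The difference is one of direction. You build the structures on $M_\xi$ \emph{intrinsically}: the smooth structure via the quotient-manifold theorem for the free and proper action of the closed subgroup $G_\xi$, and $\omega_\xi$ via presymplectic reduction (descending $\omega_R$ through $p_\xi$, which is legitimate since a closed form whose kernel is exactly the vertical distribution is automatically basic); you must then \emph{prove} that $J^L_\xi$ is a symplectomorphism onto $\Ad^*_G(\xi)$. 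The paper instead uses the bijection $J^L_\xi$ to \emph{transfer} the smooth structure and the KKS symplectic form from the coadjoint orbit back to $M_\xi$, and then only has to check the single compatibility $p_\xi^*\omega_\xi=\omega_R$ via the Poisson property of $J^L$; this sidesteps both the quotient-manifold theorem and the reduction of the presymplectic form. Your route proves slightly more (it shows the quotient structure exists independently of $J^L$, which is the honest Marsden--Weinstein picture), at the cost of two extra verifications you should not gloss over: that $\omega_\xi$ obtained by descent is nondegenerate (rank constancy from Proposition~\ref{stat4} gives this), and that the injective map $J^L_\xi$ between equal-dimensional manifolds is actually an immersion --- which follows because $\ker T_\zeta J^L$ meets $T_\zeta\bigl((J^R)^{-1}(\xi)\bigr)$ exactly in the leaf direction --- before the dimension count can promote it to a diffeomorphism.
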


\begin{proof}
For each $\zeta\in (J^R)^{-1}(\xi)$, we know by Proposition~\ref{stat4} that the kernel at $\zeta$ of the closed 
$2$-form induced on $(J^R)^{-1}(\xi)$ by $\omega_{T^*G}$ is the tangent space at $\zeta$ to the orbit through 
that point of the action $\widehat\Phi^L$ restricted to $G_\xi$. Its dimension, equal to
$\dim G_{\xi}$, does not depend on $\zeta$. Therefore, the rank of the closed $2$-form induced  
by $\omega_{T^*G}$ on $(J^R)^{-1}(\xi)$ is constant, and its kernel is an integrable vector sub-bundle of
$T\bigl((J^R)^{-1}(\xi)\bigr)$. The orbits of the action on $(J^R)^{-1}(\xi)$ of $\widehat\Phi^L$ restricted 
to $G_\xi$ are the leaves of the foliation $\mathcal F$ determined by this integrable vector sub-bundle. Therefore they are the
the maximal connected isotropic submanifolds contained in $(J^R)^{-1}(\xi)$. 
Let $M_\xi$ be the set of leaves of $\mathcal F$. For each $\zeta\in (J^R)^{-1}(\xi)$, $\ker T_\zeta J^L$ is the
tangent space at $\zeta$ to the orbit of $\widehat\Phi^R$ throught that point; its interserction with 
$T_\zeta\bigl((J^R)^{-1}(\xi)\bigr)$ is the tangent space at $\zeta$ to the leaf of $\mathcal F$ through
that point. Therefore, the map $J^L$ restricted to $(J^R)^{-1}(\xi)$ induces by quotient a map $J^L_\xi:M_\xi\to{\frak g}^*$,
which is injective and whose image is $\Ad^*_{G}(\xi)$, the coadjoint orbit of $\xi$. 
Considered as defined on $M_\xi$ with values in $\Ad^*_{G}(\xi)$, the map $J^L_\xi$ becomes bijective 
and can be used to transfer on $M_\xi$ the smooth manifold structure of $\Ad^*_{G}(\xi)$. We know that the
coadjoint orbits are the symplectic leaves of ${\frak g}^*$ endowed with its Kirillov-Kostant-Souriau
Poisson structure and that $J^L:T^*G\to{\frak g}^*$ is a Poisson map (see, for example, \cite{lima}, Chapter IV, Theorem 4.8, Remarks 4.9 and Proposition 5.2). Therefore the pull-back of the canonical symplectic form on
$\Ad^*_G(\xi)$ is a symplectic form on $M_\xi$ whose pull-back by the canonical projection
$p_\xi:(J^R)^{-1}(\xi)\to M_\xi$ is the $2$-form induced by $\omega_{T^*G}$ on ${J^R}^{-1}(\xi)$.
\end{proof}

\subsection{The Marsden-Weinstein and Euler-Poincar\'e Reduction Procedures.} 
In the Marsden-Weinstein reduction procedure~\cite{marsdenweinstein}, when the Hamiltonian $H$ 
remains invariant under the action $\widehat\Phi^R$, one considers the subset $(J^R)^{-1}(\xi)$ of $T^*G$ on which the
momentum map $J^R$ takes a given value $\xi\in{\frak g}^*$. In the present case, it is a submanifold,
image of the left-invariant $1$-form on $G$ whose value at the neutral element is $\xi$. Then one looks at the
subgroup made by elements $g\in G$ such that $\widehat\Phi^R_g$ maps this submanifold onto itself. 
We have seen (Corollary~\ref{stat5}) that it is $G_\xi$, and that the set of orbits of its action on
on $(J^R)^{-1}(\xi)$ is a smooth symplectic manifold $(M_\xi,\omega_\xi)$. 
This symplectic manifold is the  \emph{Marsden-Weinstein
reduced symplectic manifold} for the value $\xi$ of the momentum map $J^R$. Since $H$ is constant on each orbit of the action
of $G_\xi$ on $(J^R)^{-1}(\xi)$, there exists on $M_\xi$ a unique smooth function $H_\xi$ such that $H_\xi\circ p_\xi$ is equal to the restriction of 
$H$ to $(J^R)^{-1}(\xi)$. The restriction to $(J^R)^{-1}(\xi)$ of the Hamiltonian vector field ${\mathcal X}_H$ on $T^*Q$
projects, by the canonical projection $p_\xi:(J^R)^{-1}(\xi)\mapsto M_\xi$, onto the Hamiltonian vector field
${\mathcal X}_{H_\xi}$ on the symplectic manifold $(M_\xi,\omega_\xi)$. Therefore, the determination of solutions of 
the Hamilton equation $\hbox{\bf(H)}$ (\emph{i.e.}, of integral curves of
${\mathcal X}_H$) contained in $(J^R)^{-1}(\xi)$ can be made in two steps. In the first step, one determines 
the integral curves of the Hamiltonian vector field ${\mathcal X}_{H_\xi}$ on the reduced symplectic 
manifold $(M_\xi,\Omega_\xi)$. In the second step (sometimes called \emph{reconstruction}) one determines the integral 
curves of ${\mathcal X}_H$ contained in $(J^R)^{-1}(\xi)$ themselves.       
\par\smallskip

Under the same assumptions, in the Euler-Poincar\'e reduction procedure, one uses the existence of a smooth function
$h:{\frak g}^*\to{\mathbb R}$ such that $H=h\circ J^L$ and the fact that $J^L$ is a Poisson map.  Each
solution of the Hamilton equation $\hbox{\bf(H)}$ 
is mapped by $J^L$ onto a solution of the Euler-Poincar\'e equation $\hbox{\bf(E-P5)}$ (\emph{i.e.}, 
onto an integral curve of the the Hamiltonian vector field ${\mathcal X}_h$ on the Poisson manifold ${\frak g}^*$).
Therefore, the determination of solutions of the Hamilton equation $\hbox{\bf(H)}$ can be made in two steps. 
In the first step, one determines their projection by $J^L$, which are the integral
curves of the Hamiltonian vector field ${\mathcal X}_h$ on the Poisson manigold ${\frak g}^*$. 
This determination can be made easier if one uses the fact that each integral curve of ${\mathcal X}_{H_\xi}$
is contained in a coadjoint orbit (a consequence of the fact that $J^R$ is a first integral). 
In the second step one determines
the integral curves of the compatibility condition $\hbox{\bf (CC)}$, from which the solutions of $\hbox{\bf (H)}$ are easily deduced. 
\par\smallskip 

The Proposition \ref{stat4} and its Corollary \ref{stat5} clearly show that under the assumptions made in this section, 
\emph{i.e.} when the configuration space of our system is a connected Lie group $G$ and when the lift to $T^*G$ of 
the action of $G$ on itself by translations on the right leaves the Hamiltonian invariant, the first steps
of the Marsden-Weinstein and Euler-Poincar\'e reduction procedures are equivalent. In the Marsden-Weinstein reduction procedure, 
one has to determine the integral curves of ${\mathcal X}_{H_\xi}$ on the Marsden-Weinstein 
reduced symplectic manifold $(M_\xi,\omega_\xi)$. In the Euler-Poincar\'e reduction procedure, one has to 
determine the integral curves of ${\mathcal X}_h$ on the symplectic leaf $\Ad^*_{G}\xi$ of the Poisson manifold
${\frak g}^*$. By Corollary \ref{stat5}, $J^L_\xi$ is a symplectic diffeomorphism between these two symplectic manifolds such that $h\circ J^L_\xi=H_\xi$.

\begin{rmks}{\rm \hfill
\par
\noindent
{\bf 1.}\quad Similar results hold, \emph{mutatis mutandis}, when it is the lift to $T^*G$ of the action of $G$ on itself
by translations on the left (instead of on the right) which leaves the Hamiltonian invariant.
\par\smallskip\noindent
{\bf 2.}\quad Let us identify ${\frak g}^*$ with $T^*_eG$, and consider the two momentum maps
$J^R:T^*G\to{\frak g}^*$ and $J^L:T^*G\to{\frak g}^*$, associated to the actions on the right 
$\widehat\Phi^R:T^*G\times G\to T^*G$
and on the left $\Phi^L:G\times T^*G\to T^*G$, respectively. 
The cotangent bundle $T^*G$ being endowed with the Poisson structure associated to its canonical symplectic 
$2$-form $\omega_{T^*G}$ (which is the exterior differential of the Liouville $1$-form), we can define on 
${\frak g}^*$ the Poisson structure for which
$J^R$ is a Poisson map, and the Poisson structure for which $J^L$ is a Poisson map. Each of these structure
is the opposite of the other; however, they are isomorphic by the vector space automorphism
of ${\frak g}^*$ $X\mapsto -X$. 
The formula for the bracket of two functions $f$ and $g$ defined on ${\frak g}^*$ is the same fore these two Poisson structures,
 $$\{f,g\}(\xi)=\Bigl\langle \xi, \bigr[\d f(\xi),\d g(\xi)\bigr]\Bigr\rangle$$
where, in the right hand side, the differentials at $\xi$, $\d f(\xi)$ and $\d g(\xi)$, 
of the functions $f$ and $g$, 
which are linear forms on ${\frak g}^*$, are considered as elements of ${\frak g}$, identified
with $T_eG$. The bracket $\bigl[df(\xi),dg(\xi)\bigr]$ which appears in the right hand side is the bracket 
of fundamental vector fields on $G$ for the action of $G$ onto itself whose lift to $T^*G$ is the action whose momentum map
is the momentum map under consideration. In other words, it is the bracket of vector fields
\begin{itemize}

\item{} invariant by translations \emph{on the left} for the Poisson structure on ${\frak g}^*$ for which $J^R$ is a Poisson map,

\item{} invariant by translations \emph{on the right} for the Poisson structure on ${\frak g}^*$ for which $J^L$ is a Poisson
map.

\end{itemize}

When $G$ acts on $T^*G$ by the action $\widehat\Phi^L$, its action on ${\frak g}^*$ which renders $J^L$ equivariant
is an action \emph{on the left}, whose expression is
 $$(g,\xi)\mapsto \Ad^*_g(\xi)\,,\quad g\in G\,,\quad \xi\in{\frak g}^*\,,$$
and when $G$ acts on $T^*G$ by the action $\widehat\Phi^R$, its action on ${\frak g}^*$ which renders $J^R$ equivariant
is an action \emph{on the right}, whose expression is
 $$(\xi,g)\mapsto \Ad^*_{g^{-1}}(\xi)\,,\quad \xi\in{\frak g}^*\,,\quad g\in G\,.$$
For the adjoint representation our sign convention is the usual one,
 $$\Ad_g(X)=TL_g\circ TR_{{g^{-1}}}(X)\,,\quad g\in G\,,\quad X\in {\frak g}\equiv T_eG\,,$$
and for the coadjoint representation it is
 $$\Ad_g^*(\xi)=(\Ad_{g^{-1}})^t(\xi)\,,\quad g\in G\,,\quad \xi\in{\frak g}^*\equiv T^*_eG\,,$$
where $(\Ad_{g^{-1}})^t:{\frak g}^*\to{\frak g}^*$ is
the transpose of $\Ad_{g^{-1}}:{\frak g}\to{\frak g}$. With these sign conventions
$$\frac{\d (\Ad_{\exp(tX)}Y)}{\d t}\Bigm|_{t=0}=[X,Y]\,,\quad X\ \hbox{and}\ Y\in{\frak g}\,,$$
the bracket $[X,Y]$ in the right hand side being that of vector fields on $G$ invariant by translations \emph{on the left},
which is the most frequently made convention for the bracket on the Lie algebra of a Lie group.
\par\smallskip\noindent

{\bf 3.}\quad The formulae given in this section for the Hamiltonian actions of a Lie group $G$ on 
its cotangent bundle can be generalized, the canonical symplectic form on $T^*G$ being modified by
addition of the pull-back of a closed $2$-form on $G$. This generalization is useful for dealing 
with mechanical systems involving magnetic forces. See for example 
\cite{guilleminsternberg}, \cite{lima} Chapter IV section 4 and
\cite{marle}.
\par\smallskip\noindent

{\bf 4.}\quad Alan Weintein and his students \cite{weinstein, xu} have developed a very nice theory of
\emph{symplectic groupoids} in which the properties of the source and target maps generalize those of the momentum
maps $J^R$ and $J^L$ of the actions of a Lie group on its cotangent bundle. The cotangent bundle of a Lie group
is one of the simplest nontrivial examples of symplectic groupoids, a fact which should convince the reader that
symplectic groupoids are very natural structures rather than artificial mathematical artefacts.
}
\end{rmks}

\section{Symmetry Breaking and Appearance of Semi-direct Products.}\label{SymBreak}
In \cite{holm} the authors write
\emph{\lq\lq It turns out that semidirect products occur under rather general circumstances
when the symmetry in $T^*G$ is broken\rq\rq}. Let us propose an explanation of this remarkable fact.
\par\smallskip

In this section $G$ is a connected $n$-dimensional Lie group and $G_1$ is a closed, connected 
$k$-di\-men\-sio\-nal subgroup of $G$.
The notations $\Phi^R$ and $\Phi^L$ for the actions of $G$ on itself by translations on the right and on the left,
$\widehat\Phi^R$ and $\widehat\Phi^L$ for their lifts to the cotangent bundle $T^*G$, $J^R$ and $J^L$
for their momentum maps, are the same as in Section~\ref{confspaceliegroup}. We assume that $H:T^*G\to{\mathbb R}$ 
is a smooth Hamiltonian invariant by $\widehat\Phi^R_1=\widehat\Phi^R|_{G_1}$, the \emph{restriction to $G_1$ of
the action $\widehat\Phi^R$}, rather than by the action $\widehat\Phi^R$ of
the whole Lie group $G$. 
The Hamiltonian $H$ therefore cannot be written as the composition of the momentum map $J^L$ with a smooth function
defined on ${\frak g}^*$, so the Euler-Poincar\'e equation $\hbox{\bf(E-P4)}$, 
written for the action $\widehat\Phi^L$, cannot be considered as an 
autonomous differential equation on ${\frak g}^*$ for the parametrized curve
$\xi=J^L\circ\zeta$. However, we will prove that under some additional assumptions the action $\widehat\Phi^L$ 
can be extended into a Hamiltonian action of a semi-direct product of $G$ with a finite dimensional 
vector space of smooth functions defined on $G/G_1$, in such a way that the orbits of this extended 
action are the leaves of the foliation of $T^*G$ determined by the symplectic orthogonal of the sub-bundle 
tangent to the orbits of $\widehat \Phi^R_1$. The level sets of the
momentum map of this extended action are the orbits of $\widehat\Phi^R_1$, which will allow us to write the 
Euler-Poincar\'e equation for this extended action instead of for the action $\widehat\Phi^L$.
\par\smallskip
 
\subsection{Two orthogonal foliations of the cotangent bundle}

\begin{lemma}\label{stat6}
The action $\widehat\Phi^R_1$ is Hamiltonian and has $J^R_1=p_{{\frak g}^*_1}\circ J^R$ as momentum map,
where the projection $p_{{\frak g}^*_1}:{\frak g}^*\to{\frak g}^*_1$ is the transpose of the
canonical linear inclusion $i_{{\frak g}_1}:{\frak g}_1\to{\frak g}$. 
The orbits of that action are the intersections of the orbits of $\widehat\Phi^R$ with
the pull-backs $\pi_G^{-1}(gG_1)$, by the canonicat projection $\pi_G:T^*G\to G$, 
of orbits of the action of $G_1$ on $G$ by translations on  the right.
The set $\mathcal F$ of  vectors tangent to these orbits  and its symplectic orthogonal 
$\orth{\mathcal F}$ are both completely integrable vector sub-bundles of $T(T^*G)$, of ranks $k$ and $2n-k$, 
respectively. 
\end{lemma}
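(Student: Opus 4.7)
\textbf{Proof plan for Lemma \ref{stat6}.} The plan is to deduce everything from two facts already in hand: the explicit formula $J^R(\zeta) = (TL_{\pi_G(\zeta)})^t(\zeta)$ of Section \ref{confspaceliegroup}, and the general identity, valid for any Hamiltonian action, that the tangent space to an orbit and the kernel of the differential of the momentum map are symplectic orthogonals of each other.

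First I would check the Hamiltonian character of $\widehat\Phi^R_1$ and identify its momentum map. For every $X\in{\frak g}_1$ the fundamental vector field on $T^*G$ for the restricted action coincides with the fundamental vector field $X_{T^*G}$ for $\widehat\Phi^R$, because $\widehat\Phi^R_1$ is just $\widehat\Phi^R$ composed with the inclusion $G_1\hookrightarrow G$. Since $\widehat\Phi^R$ admits $J^R$ as momentum map, one has $i(X_{T^*G})\omega_{T^*G}=-\d\langle J^R,X\rangle$. But for $X\in{\frak g}_1$, $\langle J^R,X\rangle=\langle J^R,i_{{\frak g}_1}(X)\rangle=\langle p_{{\frak g}^*_1}\circ J^R,X\rangle=\langle J^R_1,X\rangle$, so $J^R_1=p_{{\frak g}^*_1}\circ J^R$ is a momentum map for $\widehat\Phi^R_1$.

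Next I would describe the orbits. Since $\pi_G\bigl(\widehat\Phi^R(\zeta,g)\bigr)=\pi_G(\zeta)g$, for $g\in G_1$ the action $\widehat\Phi^R_1$ maps $\pi_G^{-1}(xG_1)$ to itself, so each $\widehat\Phi^R_1$-orbit is contained in the intersection of a $\widehat\Phi^R$-orbit with a set of the form $\pi_G^{-1}(xG_1)$. Conversely, by Proposition~\ref{stat4} the $\widehat\Phi^R$-orbit through $\zeta$ is $\bigl\{(TR_{g^{-1}})^t(\zeta):g\in G\bigr\}$, and its intersection with $\pi_G^{-1}\bigl(\pi_G(\zeta)G_1\bigr)$ consists exactly of the points with $g\in G_1$, i.e.\ the $\widehat\Phi^R_1$-orbit. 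The action $\widehat\Phi^R$ being free (its orbits being diffeomorphic to $G$), $\widehat\Phi^R_1$ is free too, so the orbits form an embedded $k$-dimensional regular foliation, and $\mathcal F$ is its tangent distribution, of rank $k$ and completely integrable (it is the tangent bundle of a foliation).

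Finally I would identify $\orth{\mathcal F}$ as the tangent distribution of the foliation by the level sets of $J^R_1$. The freeness of $\widehat\Phi^R_1$ implies that $J^R_1$ is a submersion: the explicit formula for $J^R$ shows that $J^R$ itself is a submersion fibrewise (since $(TL_x)^t$ is a linear isomorphism), and composition with the surjection $p_{{\frak g}^*_1}$ yields a submersion of rank $k=\dim{\frak g}_1$. Hence the level sets $(J^R_1)^{-1}(\eta)$ are $(2n-k)$-dimensional submanifolds, their tangent spaces form an integrable sub-bundle of rank $2n-k$, and at each $\zeta\in T^*G$ this tangent space equals $\ker T_\zeta J^R_1$, which by the general orthogonality identity recalled at the end of the proof of Proposition~\ref{stat4} is the symplectic orthogonal of the tangent space at $\zeta$ to the $\widehat\Phi^R_1$-orbit through $\zeta$, i.e.\ of ${\mathcal F}_\zeta$. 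The main thing to be careful about is the verification that $J^R_1$ is a submersion, but this follows either from the explicit formula for $J^R$ combined with the surjectivity of $p_{{\frak g}^*_1}$, or equivalently from the fact that the restricted action is free.
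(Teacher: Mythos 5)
Your proposal is correct, and it coincides with the paper's proof for the first two assertions: the identification of the momentum map via $\langle J^R,i_{{\frak g}_1}(X)\rangle=\langle p_{{\frak g}^*_1}\circ J^R,X\rangle$ and the description of the orbits as intersections of $\widehat\Phi^R$-orbits with the sets $\pi_G^{-1}(gG_1)$ (you are in fact slightly more careful than the paper on the converse inclusion, checking via $\pi_G\bigl(\widehat\Phi^R(\zeta,g)\bigr)=\pi_G(\zeta)g$ that only $g\in G_1$ can occur). Where you genuinely diverge is the complete integrability of $\orth{\mathcal F}$. The paper observes that $\orth{\mathcal F}$ is generated by Hamiltonian vector fields whose Hamiltonians are constant on the $\widehat\Phi^R_1$-orbits, shows via the Jacobi identity that the Poisson bracket of two such functions is again constant on these orbits, and concludes by the Frobenius theorem; this is Libermann's ``symplectically complete foliation'' argument, which works without ever exhibiting the leaves. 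You instead identify $\orth{\mathcal F}_\zeta$ with $\ker T_\zeta J^R_1$ (by the standard orthogonality between orbit tangent spaces and the kernel of the momentum map) and argue that $J^R_1$ is a submersion, so that $\orth{\mathcal F}$ is literally the tangent bundle of the foliation by level sets of $J^R_1$ and integrability is automatic. Your route is shorter and perfectly valid here --- the submersion property does follow from the explicit formula $J^R(\zeta)=(TL_{\pi_G(\zeta)})^t(\zeta)$, whose restriction to each fibre is a linear isomorphism, composed with the surjection $p_{{\frak g}^*_1}$ --- but note that it quietly absorbs the content of the paper's Proposition~\ref{stat7} (the leaves of $\orth{\mathcal F}$ are the level sets of $J^R_1$), which the paper prefers to keep as a separate statement, and that the Poisson-bracket argument is the one that generalizes when no globally defined submersion realizing the orthogonal foliation is available.
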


\begin{proof}
The fundamental vector fields on 
$T^*G$ for the action $\widehat\Phi^R_1$ are the Hamiltonian vector fields
whose Hamiltonians can be written 
 $$\bigl\langle J^R, i_{{\frak g}_1}(X)\bigr\rangle
  =\bigl\langle (i_{{\frak g}_1})^t\circ J^R,X\bigr\rangle
  =\bigl\langle p_{{\frak g}^*_1}\circ J^R,X\bigr\rangle\,,\quad\hbox{with}\ X\in{\frak g}_1\,.
 $$
Therefore the action $\widehat\Phi^R_1$ is Hamiltonian and has $p_{{\frak g}^*_1}\circ J^R$ as momentum map.
This action being the restriction to $G_1$ of $\widehat\Phi^R:T^*G\times G\to T^*G$,
which projects onto the action $\Phi^R:G\times G\to G$, its orbits are the intersections of the orbits of $\widehat\Phi^R$ with
the pull-backs by $\pi_G$ of cosets $gG_1$, wit $g\in G$. 
Since all these orbits are of the same dimension $k$, the set $\mathcal F$ is a completely integrable 
vector sub-bundle of $T(T^*G)$. Its symplectic orthogonal $\orth{\mathcal F}$ is therefore a rank $2n-k$ 
vector sub-bundle of $T(T^*G)$. This vector sub-bundle is generated by Hamiltonian vector fields   
whose Hamiltonians $f$ are smooth functions on $T^*G$ whose restrictions to each orbit of $\widehat\Phi^R_1$ are constants.
Let $f_1$ and $f_2$ be two such functions. The bracket of the Hamiltonian vector field ${\mathcal X}_{f_1}$ and
${\mathcal X}_{f_2}$ is the Hamiltonian vector field ${\mathcal X}_{\{f_ 1,f_2\}}$. Let $h:T^*G\to {\mathbb R}$ be 
the smooth function 
 $$h=\bigl\langle p_{{\frak g}^*_1}\circ J^R,X\bigr\rangle\,,
 $$
where $X$ is any element in ${\frak g}_1$. Using the Jacobi identity, we can write
 $$i({\mathcal X}_h)\d\bigl(\{f_1,f_2\}\bigr)=\bigl\{h,\{f_1,f_2\}\bigr\}
   =\bigl\{\{h,f_1\},f_2\bigr\}+\bigl\{f_1,\{h,f_2\}\bigr\}=0\,,
 $$
since $\{h,f_1\}=i({\mathcal X}_h)\d f_1=0$ and $\{h,f_2\}=i({\mathcal X}_h)\d f_2=0$, 
the vector field ${\mathcal X}_h$ being tangent to the orbits of $\widehat\Phi^R_1$ 
and the retrictions of the functions $f_1$ and
$f_2$ to each orbit of this action being constants. Since 
${\mathcal F}$ is generated by Hamiltonian vector fields such as ${\mathcal X}_h$,
the restriction of $\{f_1,f_2\}$ to each orbit of $\widehat\Phi^R_1$ is constant. The Frobenius theorem then proves that $\orth{\mathcal F}$ is completely integrable.
\end{proof}

\begin{rmks}{\rm\hfill
\par\noindent
{\bf 1.}\quad Lemma~\ref{stat6} may be seen as a special case of a result due to P.~Libermann
(see \cite{libermann1, libermann2} or \cite{lima} Chapter III Proposition 9.7).

\par\smallskip\noindent
{\bf 2.}\quad On the symplectic manifold $(T^*G,\omega_{T^*G})$ each of the two foliations
$\mathcal F$ and $\orth{\mathcal F}$ is the symplectic orthogonal of the other, and is such that the space of smooth functions whose restrictions to the leaves are constants is closed with respect to the Poisson bracket. If the set of leaves of one of these foliations has a smooth manifold structure for which the canonical projection of $T^*G$ onto this set is a submersion, there exists on this set a unique Poisson structure for which the canonical projection is a Poisson map. The pair of Poisson manifolds made by the sets of leaves when this occurs for both foliations is said to be a \emph{dual pair}, in the terminology introduced by Alan 
Weinstein~\cite{weinsteinlocstruct}. 
}
\end{rmks}

The next two two propositions will allow us to prove that $\mathcal F$ and $\orth{\mathcal F}$ determine indeed a dual pair.

\begin{proposition}\label{stat7}
The leaves of the foliation of $T^*G$ determined by $\orth{\mathcal F}$ are the left invariant affine sub-bundles
whose fibres over the neutral element are affine subspaces of ${\frak g}^*$ whose associated vector subspace is
the annihilator ${\frak g}_1^0$ of the sub-algebra ${\frak g}_1$. Moreover, they coincide with the level sets of the momentum map $J_1^R=p_{{\frak g}^*}\circ J^R:T^*G\to {\frak g}_1^*$. The map which associates 
to each leaf the value taken by $J^R_1$ on that leaf is a bijection of $\leaves(\orth{\mathcal F})$
onto ${\frak g}^*_1$. 
\end{proposition}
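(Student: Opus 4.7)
The plan is to identify $\orth{\mathcal F}$ as the kernel distribution of the momentum map $J^R_1$ and then describe its level sets explicitly using the left-trivialization of $T^*G$ provided by $J^R$.

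First, I would invoke Lemma~\ref{stat6}: the action $\widehat\Phi^R_1$ is Hamiltonian with momentum map $J^R_1 = p_{{\frak g}^*_1}\circ J^R$, and its orbits are the integral manifolds of $\mathcal F$. I would then apply the standard reduction-theoretic fact (the one quoted already in the proof of Proposition~\ref{stat4}) that for any Hamiltonian action of a Lie group on a symplectic manifold, at every point the tangent space to the orbit and the kernel of the tangent map of the momentum map are mutually symplectically orthogonal. Applied at each $\zeta\in T^*G$ this gives $\orth{\mathcal F}_\zeta = \ker T_\zeta J^R_1$. Since $J^R:T^*G\to{\frak g}^*$ is a submersion and $p_{{\frak g}^*_1}:{\frak g}^*\to{\frak g}^*_1$ is a linear surjection, $J^R_1$ is itself a submersion, so its fibres are smooth submanifolds of codimension $k$, matching the rank $2n-k$ of $\orth{\mathcal F}$. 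The connected components of the fibres of $J^R_1$ are therefore exactly the leaves of the foliation integrating $\orth{\mathcal F}$.

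Next I would describe the fibres explicitly. The map $\zeta\mapsto(\pi_G(\zeta),J^R(\zeta))$ is a diffeomorphism $T^*G\to G\times{\frak g}^*$ (the left trivialization), because $J^R(\zeta)=(TL_{\pi_G(\zeta)})^t(\zeta)$ is linear and invertible on each fibre of $\pi_G$. Under this trivialization, $J^R_1$ becomes the projection $(g,\xi)\mapsto p_{{\frak g}^*_1}(\xi)$. Since $p_{{\frak g}^*_1}$ is the transpose of the inclusion $i_{{\frak g}_1}:{\frak g}_1\hookrightarrow{\frak g}$, the preimage $p_{{\frak g}^*_1}^{-1}(\alpha)$ is exactly the affine subspace of ${\frak g}^*$ consisting of those $\xi$ whose restriction to ${\frak g}_1$ is $\alpha$; its associated vector subspace is the annihilator ${\frak g}_1^0$. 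Hence
$$(J^R_1)^{-1}(\alpha) \;\cong\; G\times(\alpha+{\frak g}_1^0),$$
which is precisely a left-invariant affine sub-bundle of $T^*G$ with fibre over $e$ equal to $\alpha+{\frak g}_1^0$.

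To conclude that these fibres are the leaves (not merely unions of leaves), I would observe that $G$ is connected by assumption and $\alpha+{\frak g}_1^0$ is an affine subspace of a vector space, hence connected; so each fibre is connected and equals a single leaf. Finally, as $\alpha$ runs over ${\frak g}^*_1$ we get each leaf exactly once and the assignment $\text{leaf}\mapsto\alpha$ is manifestly a bijection $\leaves(\orth{\mathcal F})\to{\frak g}^*_1$. The main conceptual step is the identification $\orth{\mathcal F}=\ker TJ^R_1$; once that is in place, everything else is bookkeeping in the trivialization. The only subtle point to watch is the connectedness argument that upgrades ``connected component of a level set'' to ``level set'', which is why the standing connectedness hypothesis on $G$ matters.
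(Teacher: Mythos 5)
Your proposal is correct and takes essentially the same route as the paper's proof: both identify the level sets of $J^R_1$ (via the left trivialization $\zeta\mapsto(\pi_G(\zeta),J^R(\zeta))$, which the paper carries out as a direct computation of $J^R_1(\zeta_1)-J^R_1(\zeta_2)$ paired against $X\in{\frak g}_1$) as left-invariant affine sub-bundles modelled on ${\frak g}_1^0$, use the standard symplectic orthogonality between orbit tangent spaces and $\ker TJ^R_1$ to conclude that the leaves of $\orth{\mathcal F}$ are the connected components of these level sets, and invoke the connectedness of $G$ to upgrade components to full level sets.
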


\begin{proof}
The maps $J^R:T^*G\to{\frak g}^*$ and 
$p_{{\frak g}^*_1}:{\frak g}^*\to{\frak g}^*_1$ both are surjective submersions. Therefore $J^R_1=p_{{\frak g}^*_1}\circ J^R$ is a surjective submersion. Let
$\zeta_1$ and $\zeta_2$ be two elements of $T^*G$. Using the expression of $J^R$, we obtain, for any $X\in{\frak g}_1$,
 $$\bigl\langle J^R_1(\zeta_1)-J^R_1(\zeta_2),X\bigr\rangle  
  =\bigl\langle (TL_{\pi_G(\zeta_1)})^t(\zeta_1)-TL_{\pi_G(\zeta_2)})^t(\zeta_2),X
    \bigr\rangle\,.
 $$
Therefore $J^R_1(\zeta_1)=J^R_1(\zeta_2)$ if and only if
$(TL_{\pi_G(\zeta_1)})^t(\zeta_1)-TL_{\pi_G(\zeta_2)})^t(\zeta_2)\in{\frak g}_1^0$, 
the annihilator of ${\frak g}_1$. The level sets of $J_1^R$ are therefore the
left invariant affine sub-bundles whose fibres over the neutral element are affine subspaces 
of ${\frak g}^*$ whose associated vector subspace is ${\frak g}_1^0$. Since for each $\zeta\in T^*G$ $\ker T_\zeta J_1^R$ is the symplectic orthogonal of ${\mathcal F}_\zeta$, the leaves of the foliation of $T^*G$ determined by $\orth{\mathcal F}$ are the connected components of the level sets of $J_1^R$. But 
since $G$ is assumed to be connected, these level sets are connected, therefore coincide with the elements
of $\leaves(\orth{\mathcal F})$. The last assertion immediately follows.
\end{proof} 

\begin{proposition}\label{stat8}
Let $\varpi:G\to G/G_1$ be the canonical projection which associates to each $g\in G$ the coset $gG_1$. The map 
$(J^L,\varpi\circ\pi_G):T^*G\to{\frak g}^*\times(G/G_1)$
is a surjective submersion, whose restriction to 
each leaf of the foliation determined by 
${\mathcal F}$ is constant. The map defined on the set 
$\leaves({\mathcal F})$ of leaves of that foliation, which associates to each leaf the value taken by $(J^L,\varpi\circ\pi_G)$ on that leaf, is a bijection of $\leaves({\mathcal F})$ onto
${\frak g}^*\times (G/G_1)$. 
\end{proposition}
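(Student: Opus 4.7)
The plan is to verify the three assertions in order: constancy on leaves of $\mathcal F$, surjectivity onto the product, and submersion; then deduce the bijection onto $\leaves(\mathcal F)$ by showing injectivity of the induced map.

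First I would show that $(J^L, \varpi\circ\pi_G)$ is constant along each $\widehat\Phi^R_1$-orbit. The leaves of $\mathcal F$ are precisely these orbits (Lemma~\ref{stat6}). Since $\widehat\Phi^R$ is the full right-action lift and $J^L$ is its $\Ad^*$-invariant companion whose level sets are exactly the $\widehat\Phi^R$-orbits (recalled in the discussion preceding Proposition~\ref{stat4}), $J^L$ is invariant under $\widehat\Phi^R$, a fortiori under its restriction $\widehat\Phi^R_1$. For the second component, if $\zeta\in T^*_xG$ and $g\in G_1$, then $\pi_G\bigl(\widehat\Phi^R(\zeta,g)\bigr)=xg\in xG_1$, so $\varpi\circ\pi_G$ is constant on each coset $xG_1$, hence on each orbit of $\widehat\Phi^R_1$.

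Next, surjectivity is elementary: given $(\xi,yG_1)\in{\frak g}^*\times(G/G_1)$, take $\zeta=(TR_{y^{-1}})^t(\xi)\in T^*_yG$, for which $J^L(\zeta)=(TR_y)^t\circ(TR_{y^{-1}})^t(\xi)=\xi$ and $\varpi\circ\pi_G(\zeta)=yG_1$. The submersion property is the main substantive point and I would handle it by a dimension count on the differential. At $\zeta\in T^*G$, $\ker T_\zeta J^L$ equals the tangent space to the $\widehat\Phi^R$-orbit through $\zeta$, i.e.\ the image of the map $X\mapsto X_{T^*G}^R(\zeta)$ for $X\in{\frak g}$. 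The projection $T\pi_G$ sends this fundamental vector field to the corresponding fundamental vector field on $G$ for $\Phi^R$, namely $TL_{\pi_G(\zeta)}(X)$. This value lies in $\ker T_{\pi_G(\zeta)}\varpi=TL_{\pi_G(\zeta)}({\frak g}_1)$ precisely when $X\in{\frak g}_1$. Hence $\ker T_\zeta(J^L,\varpi\circ\pi_G)$ coincides with the tangent space to the $\widehat\Phi^R_1$-orbit through $\zeta$, that is with ${\mathcal F}_\zeta$, which has dimension $k$. By rank-nullity the image has dimension $2n-k=\dim{\frak g}^*+\dim(G/G_1)$, so the map is a submersion (and in particular open and surjective onto its image, reconfirming step two).

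Finally I would prove injectivity of the quotient map. Suppose $\zeta_1,\zeta_2\in T^*G$ satisfy $J^L(\zeta_1)=J^L(\zeta_2)=\xi$ and $\varpi\circ\pi_G(\zeta_1)=\varpi\circ\pi_G(\zeta_2)$. The first equality forces $\zeta_1$ and $\zeta_2$ to lie in the same $\widehat\Phi^R$-orbit, so $\zeta_2=\widehat\Phi^R(\zeta_1,h)$ for a unique $h\in G$; then $\pi_G(\zeta_2)=\pi_G(\zeta_1)\,h$, and the second equality gives $\pi_G(\zeta_1)\,h\in\pi_G(\zeta_1)G_1$, hence $h\in G_1$. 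Thus $\zeta_1$ and $\zeta_2$ belong to the same $\widehat\Phi^R_1$-orbit, i.e.\ the same leaf of $\mathcal F$, proving injectivity and completing the bijection $\leaves({\mathcal F})\cong{\frak g}^*\times(G/G_1)$. The main obstacle is the identification of $\ker T_\zeta(J^L,\varpi\circ\pi_G)$ with ${\mathcal F}_\zeta$; all other steps are straightforward bookkeeping.
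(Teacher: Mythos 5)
Your proof is correct, and it follows the same overall route as the paper: constancy on leaves from the invariance of $J^L$ under $\widehat\Phi^R$ and of $\varpi\circ\pi_G$ under right translation by $G_1$, explicit surjectivity, and identification of the level sets of $(J^L,\varpi\circ\pi_G)$ with the leaves of $\mathcal F$. The one place where you genuinely diverge is the submersion property, and there your version is the stronger one. The paper merely observes that $J^L$, $\pi_G$ and $\varpi$ are surjective submersions and concludes that $(J^L,\varpi\circ\pi_G)$ is a submersion; as stated this is too quick, since a pair map whose two components are submersions need not itself be a submersion (the ranks of the two differentials could fail to add up). Your kernel computation --- showing that $\ker T_\zeta(J^L,\varpi\circ\pi_G)$ is exactly ${\mathcal F}_\zeta$ because the fundamental vector field of $X$ for $\Phi^R$ at $x$ is $TL_x(X)$, which lies in $\ker T_x\varpi=TL_x({\frak g}_1)$ precisely when $X\in{\frak g}_1$ --- supplies the missing rank count $2n-k=\dim{\frak g}^*+\dim(G/G_1)$ and is the right way to close that gap; it also re-derives the leaf identification infinitesimally. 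For the bijection, the paper reads it off from Lemma~\ref{stat6}'s description of the $\widehat\Phi^R_1$-orbits as intersections of level sets of $J^L$ with $\pi_G^{-1}(gG_1)$, whereas you argue injectivity by hand via the freeness of $\widehat\Phi^R$ (unique $h$ with $\zeta_2=\widehat\Phi^R(\zeta_1,h)$, forced into $G_1$ by the coset condition); the two arguments are equivalent in substance, yours being slightly more self-contained and the paper's slightly shorter.
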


\begin{proof}
The maps $J^L:T^*G\to{\frak g}^*$, $\pi_G:T^*G\to G$ and $\varpi:G\to G/G_1$ are surjective submersions. Therefore $\varpi\circ\pi_G$ and $(J^L,\varpi\circ\pi_G)$ are submersions, and its expression proves that  
$(J^L,\varpi\circ\pi_G)$ is surjective. We already know (Lemma \ref{stat1}) that the level sets of $J^L$ 
are the orbits of
the action $\widehat\Phi^R$. We have seen~(Lemma \ref{stat6}) that the orbits of 
$\widehat\Phi^R_1$, in other words the leaves of the foliation determined by 
${\mathcal F}$, are the intersections of the level sets of $J^L$ with
the pull-backs by the canonical projection $\pi_G:T^*G\to G$, 
of orbits of the action of $G_1$ on $G$ by translations on  the right. Since these orbits are inverse images of points in $G/G_1$ by the projection $\varpi$, each leaf of the foliation determined by ${\mathcal F}$ is a level set of the map 
$(J^L,\varpi\circ\pi_G)$. Therefore, $(J^L,\varpi\circ\pi_G)$ determines indeed a bijection of $\leaves({\mathcal F})$ onto ${\frak g}^*\times (G/G_1)$.        
\end{proof}

\begin{proposition}\label{stat9}
On each of the two smooth manifolds ${\frak g}^*_1$ and ${\frak g}^*\times(G/G_1)$, there exists a unique 
Poisson structure for which, when $T^*G$ is equipped with the Poisson structure associated to its symplectic 
form $\omega_{T^*G}$, the maps $J^R_1:T^*G\to{\frak g}^*_1$ and 
$(J^L,\varpi\circ\pi_G):T^*G\to{\frak g}^*\times(G/G_1)$ are Poisson maps.
Moreover, there exists a unique smooth function $h:{\frak g}^*\times(G/G_1)\to{\mathbb R}$ such that
 $$H=h\circ (J^L,\varpi\circ\pi_G)$$
and the Hamiltonian vector field ${\mathcal X}_H$ on the symplectic manifold $(T^*G,\omega_{T^*G})$ is mapped, by
the prolongation to vectors of the submersion $(J^L,\varpi\circ\pi_G)$, onto the Hamiltonian vector field
${\mathcal X}_h$ on the Poisson manifold ${\frak g}^*\times(G/G_1)$.
\end{proposition}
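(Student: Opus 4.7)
The plan is to treat the proposition as three linked assertions: (i) existence and uniqueness of the Poisson structure on ${\frak g}^*_1$ making $J^R_1$ Poisson, (ii) existence and uniqueness of the Poisson structure on ${\frak g}^*\times(G/G_1)$ making $(J^L,\varpi\circ\pi_G)$ Poisson, and (iii) the reduction of $H$ and its Hamiltonian vector field. The essential ingredient in all three is that each of the two maps is a surjective submersion onto its target, with connected fibres, whose fibre-constant functions form a Poisson subalgebra of $C^\infty(T^*G)$.

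For (i), I would simply invoke Lemma~\ref{stat6}: since $\widehat\Phi^R_1$ is a Hamiltonian action of the Lie group $G_1$ with momentum map $J^R_1=p_{{\frak g}^*_1}\circ J^R$, the classical theorem on momentum maps (for example \cite{lima}, Chapter IV Proposition~5.2, already cited in the excerpt) furnishes ${\frak g}^*_1$ with its Kirillov--Kostant--Souriau Poisson structure and guarantees that $J^R_1$ is a Poisson map. Uniqueness is immediate from the fact that $J^R_1$ is a surjective submersion: any Poisson bracket on ${\frak g}^*_1$ making $J^R_1$ Poisson is forced to satisfy $\{\widetilde f_1,\widetilde f_2\}\circ J^R_1=\{\widetilde f_1\circ J^R_1,\widetilde f_2\circ J^R_1\}_{T^*G}$ on the open set where the right-hand side is defined, hence everywhere.

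For (ii), I would use the general quotient principle: a surjective submersion $\pi:M\to N$ from a Poisson manifold $M$ to a smooth manifold $N$ with connected fibres admits a (necessarily unique) Poisson structure on $N$ making $\pi$ Poisson provided that $\pi^*C^\infty(N)$ is closed under the Poisson bracket of $M$. By Proposition~\ref{stat8}, $(J^L,\varpi\circ\pi_G)$ is a surjective submersion whose fibres are the leaves of $\mathcal F$, i.e.\ the orbits of $\widehat\Phi^R_1$; since $G_1$ is connected these fibres are connected, so $\pi^*C^\infty(N)$ coincides with the algebra of smooth functions on $T^*G$ constant on every orbit of $\widehat\Phi^R_1$. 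That this algebra is stable under the Poisson bracket is exactly the Jacobi-identity computation carried out in the proof of Lemma~\ref{stat6}. Applying the quotient principle gives the desired structure.

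For (iii), the Hamiltonian $H$ is by assumption invariant under $\widehat\Phi^R_1$ and therefore constant on leaves of $\mathcal F$, i.e.\ on fibres of $(J^L,\varpi\circ\pi_G)$; the submersion theorem with connected fibres then yields a unique smooth $h:{\frak g}^*\times(G/G_1)\to{\mathbb R}$ with $H=h\circ(J^L,\varpi\circ\pi_G)$. The assertion about Hamiltonian vector fields is a one-line consequence of the Poisson-map property: for every $f\in C^\infty\bigl({\frak g}^*\times(G/G_1)\bigr)$,
\[
\{h,f\}\circ(J^L,\varpi\circ\pi_G)=\{H,f\circ(J^L,\varpi\circ\pi_G)\}_{T^*G},
\]
so pairing both sides with an arbitrary covector on the target and reading off gives $T(J^L,\varpi\circ\pi_G)\cdot\mathcal X_H=\mathcal X_h\circ(J^L,\varpi\circ\pi_G)$. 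The main potential obstacle is bookkeeping for (ii): one must check that the fibres are connected and that the Poisson bracket descends as a smooth bi-derivation on ${\frak g}^*\times(G/G_1)$. Connectedness is handed to us by the standing hypothesis that $G_1$ is connected, and smoothness follows because $(J^L,\varpi\circ\pi_G)$ is a submersion and smoothness of $\{\widetilde f_1,\widetilde f_2\}$ can be checked locally in slice charts.
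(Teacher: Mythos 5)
Your proposal is correct and follows essentially the same route as the paper: the paper's proof simply cites Propositions~\ref{stat7} and~\ref{stat8} to conclude that $\bigl({\frak g}^*_1,{\frak g}^*\times(G/G_1)\bigr)$ is a dual pair in Weinstein's sense, which is exactly the quotient principle you spell out (surjective submersions with connected fibres whose fibre-constant functions are closed under the Poisson bracket, the closure being the Jacobi-identity computation of Lemma~\ref{stat6}). Your extra observation that the structure on ${\frak g}^*_1$ is the Kirillov--Kostant--Souriau one via the momentum-map theorem is a harmless refinement, and your treatment of the descent of $H$ and of ${\mathcal X}_H$ matches the paper's.
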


\begin{proof} The first assertion follows from Propositions \ref{stat7} and \ref{stat8} which show that the 
pair of manifolds $\bigl({\frak g}_1^*,{\frak g}^*\times(G/G_1)\bigr)$ is a dual pair of Poisson manifolds 
in the sense of Alan Weinstein. The second assertion is an immediate consequence of the constancy of $H$ on each leaf of the foliation determined by $\mathcal F$ and of the fact that $(J^L,\varpi\circ\pi_G)$ is a submersion.
\end{proof}

\begin{rmk}{\rm
Proposition \ref{stat9} shows that 
as a first step for the determination of integral curves of the Hamiltonian vector field ${\mathcal X}_H$ on $T^*G$, one can determine their projections by $(J^l,\varpi\circ\pi_G)$ on the Poisson manifold
${\frak g}^*\times(G/G_1)$, which are the integral curves of ${\mathcal X}_h$. Although it is not the dual space of a finite dimensional Lie algebra, that Poisson manifold  can be used for reduction.
}
\end{rmk}

\subsection{The extended action of a semi-direct product}
\par\smallskip

Instead of the Poisson manifold ${\frak g}^*\times(G/G_1)$, one may use for reduction the dual
space of the semi-direct product of $\frak g$ with a finite dimensional vector space. The differential equation one has to solve in a first step will be defined on a vector space instead of on the product of the vector space ${\frak g}^*$ with the homogeneous space $G/G_1$, which may appear as an advantage; however, the dimension of that vector space will generally be larger than the dimension of ${\frak g}^*\times(G/G_1)$. 

The next Lemma identifies the Hamiltonian vector fields which generate the vector sub-bundle $\orth{\mathcal F}$.

\begin{lemma}\label{stat10}
For each $\zeta\in T^*G$, the fibre of $\orth{\mathcal F}$ over $\zeta$ is the direct sum of the two vector subspaces of $T_\zeta(T^*G)$:
the tangent space at $\zeta$ to the $\widehat\Phi^L$-orbit of that point,
and the vector subspace made by the values at $\zeta$ of the Hamiltonian vector fields on $T^*G$ whose Hamiltonian can be written as $h\circ\varpi\circ\pi_G$, where $h:G/G_1\to{\mathbb R}$ is a smooth function.
\end{lemma}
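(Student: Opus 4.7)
The plan is to identify $\orth{\mathcal F}_\zeta$ with $\ker T_\zeta J^R_1$ (from Proposition \ref{stat7}), show that both candidate summands sit inside this kernel, and then complete the proof by a dimension count plus a transversality argument using the cotangent bundle projection $\pi_G$. Write $(A)$ for the tangent space at $\zeta$ to the $\widehat\Phi^L$-orbit, and $(B)$ for the span of the values at $\zeta$ of the Hamiltonian vector fields ${\mathcal X}_{h\circ\varpi\circ\pi_G}$, $h\in C^\infty(G/G_1)$.

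First I would check that $(A)\subseteq\orth{\mathcal F}_\zeta$ and $(B)\subseteq\orth{\mathcal F}_\zeta$. The inclusion of $(A)$ is immediate from Proposition~\ref{stat4}: the $\widehat\Phi^L$-orbit of $\zeta$ is the level set $(J^R)^{-1}(J^R(\zeta))$, so $(A)=\ker T_\zeta J^R\subseteq\ker T_\zeta J^R_1=\orth{\mathcal F}_\zeta$. For $(B)$, the fibre ${\mathcal F}_\zeta$ is spanned by the values at $\zeta$ of the fundamental vector fields of $\widehat\Phi^R_1$, which are the Hamiltonian vector fields ${\mathcal X}_{\langle J^R_1,Y\rangle}$, $Y\in{\frak g}_1$. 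Their flows are right translations by $\exp(tY)\in G_1$, which preserve each coset $gG_1$; therefore $h\circ\varpi\circ\pi_G$ is invariant under those flows, so $\{h\circ\varpi\circ\pi_G,\langle J^R_1,Y\rangle\}=0$ for every $Y\in{\frak g}_1$, giving $(B)\subseteq\orth{\mathcal F}_\zeta$.

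Next I would compute dimensions. Since $J^R:T^*G\to{\frak g}^*$ is a submersion, the $\widehat\Phi^L$-orbit through $\zeta$ is $n$-dimensional, so $\dim(A)=n$. For $(B)$, the value ${\mathcal X}_{h\circ\varpi\circ\pi_G}(\zeta)$ depends only on $\d(h\circ\varpi\circ\pi_G)(\zeta)=T_\zeta(\varpi\circ\pi_G)^*\bigl(\d h(\varpi\pi_G(\zeta))\bigr)$. Because $\varpi\circ\pi_G$ is a submersion, its transpose at $\zeta$ is injective from $T^*_{\varpi\pi_G(\zeta)}(G/G_1)$ into $T^*_\zeta(T^*G)$, and composing with the symplectic isomorphism $\omega^\sharp_\zeta$ yields $\dim(B)=\dim(G/G_1)=n-k$. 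By Proposition~\ref{stat7}, $\dim\orth{\mathcal F}_\zeta=2n-k$, so $\dim(A)+\dim(B)=\dim\orth{\mathcal F}_\zeta$.

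Finally, to get $(A)\cap(B)=\{0\}$ (and thus the direct-sum decomposition), I would apply $T_\zeta\pi_G$. A Hamiltonian on $T^*G$ of the form $h\circ\varpi\circ\pi_G$ is constant along the fibres of $\pi_G$, so in Darboux coordinates it has no $p$-derivative; consequently its Hamiltonian vector field is vertical, i.e.\ $T_\zeta\pi_G\bigl({\mathcal X}_{h\circ\varpi\circ\pi_G}(\zeta)\bigr)=0$, and $(B)\subseteq\ker T_\zeta\pi_G$. On the other hand, $\widehat\Phi^L$ covers the transitive left action $\Phi^L$ on $G$, so the restriction of $T_\zeta\pi_G$ to $(A)$ is surjective onto $T_{\pi_G(\zeta)}G$; since $\dim(A)=n=\dim T_{\pi_G(\zeta)}G$ it is in fact an isomorphism. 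Hence $(A)\cap\ker T_\zeta\pi_G=\{0\}$, so $(A)\cap(B)=\{0\}$, and combining with the dimension count gives $(A)\oplus(B)=\orth{\mathcal F}_\zeta$. The main technical point to be careful about is the injectivity step in the dimension computation of $(B)$, which is where the submersion property of $\varpi\circ\pi_G$ and the non-degeneracy of $\omega_{T^*G}$ combine; everything else is formal.
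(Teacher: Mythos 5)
Your proof is correct, but it follows a genuinely different route from the paper's. The paper leans on the symplectic-completeness machinery it has just built: by Lemma~\ref{stat6} the sub-bundle $\orth{\mathcal F}$ is generated by the Hamiltonian vector fields of functions constant on the leaves of $\mathcal F$, and by Proposition~\ref{stat8} those functions are exactly the ones factoring through $(J^L,\varpi\circ\pi_G)$; splitting the differential of such a function into its ${\frak g}^*$-part and its $G/G_1$-part exhibits every generator of $\orth{\mathcal F}_\zeta$ as a sum of a vector tangent to the $\widehat\Phi^L$-orbit and a vertical vector, and the directness of the sum comes, exactly as in your last step, from the complementarity of the orbit-tangent and vertical subspaces. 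You instead bypass the spanning statement entirely: you prove only the two easy inclusions into $\orth{\mathcal F}_\zeta=\ker T_\zeta J^R_1$, compute $\dim(A)=n$ and $\dim(B)=n-k$, note that these add up to the rank $2n-k$ of $\orth{\mathcal F}$ given in Lemma~\ref{stat6}, and conclude by transversality of $(A)$ to the vertical distribution. This is more elementary and self-contained --- it needs neither Proposition~\ref{stat8} nor the claim that leafwise-constant Hamiltonians generate $\orth{\mathcal F}$ --- at the modest price of the explicit dimension count for $(B)$, which you correctly ground in the injectivity of the pullback by the submersion $\varpi\circ\pi_G$ on covectors together with the nondegeneracy of $\omega_{T^*G}$. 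Both arguments ultimately rest on the same two structural facts: the $\widehat\Phi^L$-orbits are the level sets of $J^R$ (Proposition~\ref{stat4}), and Hamiltonian vector fields of functions of $\varpi\circ\pi_G$ are vertical (which is also the content of Lemma~\ref{stat11}).
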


\begin{proof}
Since $\mathcal F$ and $\orth{\mathcal F}$ are two symplectically complete (in the sense of P.~Libermann
\cite{libermann1, libermann2}) and symplectically orthogonal sub-bundles of $T(T^*G)$, each of them is the set of values of Hamiltonian vector fields whose Hamiltonians are functions constant on the leaves of the foliation determined by the other one. Therefore $\orth{\mathcal F}$ is generated by the values of Hamiltonian vector fields whose Hamiltonians are composed of the map $(J^L,\varpi\circ\pi_G)$ with a smooth function defined on 
${\frak g}^*\times(G/G_1)$. For each $\zeta\in T^*G$, the tangent space at $\zeta$ to the $\widehat\Phi^L$-orbit of that point and the vertical tangent space (kernel of $T_\zeta\pi_G$) are two complementary vector subspaces of $T(T^*G)$. The announced result follows from the facts that Hamiltonian vector fields whose Hamiltonians are composed of $J^L$ with a function defined on ${\frak g}^*$ are tangent to the $\widehat\Phi^L$-orbits, while
Hamiltonian vector fields whose Hamiltonians are composed of $\varpi\circ\pi_G$ with a function defined on $G/G_1$ are vertical.
\end{proof}

\begin{lemma}\label{stat11}
Let $f\in C^\infty(G/G_1,{\mathbb R})$ be a smooth function. The flow of the Hamiltonian vector field whose Hamiltonian is
$f\circ\varpi\circ\pi_G:T^*G\to {\mathbb R}$ is
 $$\Psi_f(t,\zeta)=\zeta-t\d (f\circ\varpi)\bigl(\pi_G(\zeta)\bigr)\,.$$
\end{lemma}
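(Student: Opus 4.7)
The plan is to observe that the Hamiltonian $H = f\circ\varpi\circ\pi_G$ is a basic function (constant on each fibre of $\pi_G$), so that its Hamiltonian vector field is necessarily vertical, and then to integrate the resulting vertical flow directly.

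First I would note that $H = \pi_G^*(f\circ\varpi)$, hence $\d H = \pi_G^*\d(f\circ\varpi)$. In particular, for any vertical vector $w\in\ker T_\zeta\pi_G$ one has $\langle \d H(\zeta),w\rangle = 0$. This is the key structural fact.

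Next I would verify that $\mathcal{X}_H$ is vertical. The cleanest way is to work in Darboux coordinates $(q^i,p_i)$ on $T^*G$ induced by any chart on $G$: the canonical symplectic form is $\omega_{T^*G} = \d p_i\wedge \d q^i$ and, writing $g=f\circ\varpi$ so that $H = g(q)$, the equation $i(\mathcal{X}_H)\omega_{T^*G} = -\d H$ forces
\begin{equation*}
\mathcal{X}_H(q,p) = -\frac{\partial g}{\partial q^i}(q)\,\frac{\partial}{\partial p_i}.
\end{equation*}
This is vertical, and under the canonical identification of the vertical tangent space at $\zeta\in T_q^*G$ with $T_q^*G$ itself (obtained by differentiating the curve $s\mapsto \zeta + s\beta$ at $s=0$), it corresponds to the covector $-\d g(q) = -\d(f\circ\varpi)(\pi_G(\zeta))$.

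Third, since $\mathcal{X}_H$ is vertical its integral curve through $\zeta$ stays in the affine vector space $T^*_{\pi_G(\zeta)}G$, along which it has constant value $-\d(f\circ\varpi)(\pi_G(\zeta))$. Integrating this constant vertical velocity over time $t$ in a vector space yields
\begin{equation*}
\Psi_f(t,\zeta) = \zeta - t\,\d(f\circ\varpi)\bigl(\pi_G(\zeta)\bigr),
\end{equation*}
as announced.

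There is no real obstacle; the only point requiring a bit of care is the canonical identification of a vertical tangent vector at a point of $T^*G$ with an element of the corresponding cotangent fibre, which is what allows one to write the right-hand side as a difference in the vector space $T^*_{\pi_G(\zeta)}G$.
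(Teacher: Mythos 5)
Your proof is correct, and it is consistent with the paper's sign conventions ($\omega_{T^*G}=\d\eta=\d p_i\wedge\d q^i$ and $i({\mathcal X}_H)\omega_{T^*G}=-\d H$), which is the one place where such a computation usually goes wrong. The route differs from the paper's in direction and in tools: you start from the Hamiltonian $H=f\circ\varpi\circ\pi_G$, compute ${\mathcal X}_H$ explicitly in Darboux coordinates, observe that it is vertical and constant along each fibre under the canonical identification of $\ker T_\zeta\pi_G$ with $T^*_{\pi_G(\zeta)}G$, and integrate; the paper instead starts from the candidate flow, takes its generator $Z_f(\zeta)=-\d(f\circ\varpi)\bigl(\pi_G(\zeta)\bigr)$, and verifies intrinsically that it is Hamiltonian with the stated Hamiltonian by noting $i(Z_f)\eta=0$ and ${\mathcal L}(Z_f)\eta=i(Z_f)\omega_{T^*G}=-\d(f\circ\varpi\circ\pi_G)$ via Cartan's formula applied to the Liouville form. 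Your version is more elementary and self-contained but requires the (correct) remark that the coordinate identification of a vertical vector with a covector is chart-independent, so the formula patches globally; the paper's version is coordinate-free from the outset and isolates the general fact that fibre translation by the pull-back of a one-form on the base shifts $\eta$ by that pull-back, which is the structural reason the lemma holds.
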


\begin{proof}
The map $\Psi_f:{\mathbb R}\times T^*G\to T^*G$ is the flow of the vertical vector field $Z_f$ on $T^*G$ whose value at $\zeta\in T^*G$ is $-\d (f\circ\varpi)\bigl(\pi_G(\zeta)\bigr)$ (where the tangent space at $\zeta$ to the fibre $T^*_{\pi_G(\zeta)}G$ is identified with that vector space). Using the expression of the Liouville form 
$\eta$ and the fact that $\omega_{T^*G}=\d \eta$, we can write
 $$i(Z_f)\eta=0\,,\quad {\mathcal L}(Z_f)\eta=i(Z_f)\omega_{T^*G}=-\d(f\circ\varpi\circ\pi_G)\,,$$
where ${\mathcal L}(Z_f)\eta$ is the Lie derivative of $\eta$ with respect to $Z_f$. Therefore $Z_f$ is a Hamiltonian vector field, with $f\circ\varpi\circ\pi_G$ as Hamiltonian. 
\end{proof}

\begin{lemma}\label{stat12}
The map which associates to each $g\in G$ the linear transformation of $C^\infty(G,{\mathbb R})$
 $$f\mapsto L_{g^{-1}}^*(f)=f\circ L_{g^{-1}}$$
is a linear representation of $G$, which maps onto itself the vector subspace of functions whose restrictions to orbits of $\widehat\Phi^R_1$ are constants.
\end{lemma}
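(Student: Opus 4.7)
The plan is to verify both assertions by direct computation; essentially no obstacles arise. First I will establish that $\rho:g\mapsto L_{g^{-1}}^*$ is a homomorphism. The key identity is that left translations compose as $L_{g_1 g_2}=L_{g_1}\circ L_{g_2}$, and inverting gives $L_{(g_1 g_2)^{-1}}=L_{g_2^{-1}}\circ L_{g_1^{-1}}$. Since the pullback by a composition reverses the order, this yields
 $$\rho(g_1 g_2)(f)=f\circ L_{g_2^{-1}}\circ L_{g_1^{-1}}=\rho(g_1)\bigl(\rho(g_2)(f)\bigr),$$
so $\rho(g_1 g_2)=\rho(g_1)\circ\rho(g_2)$. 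The equality $\rho(e)=\mathrm{id}$ is immediate. Each $L_{g^{-1}}$ is a diffeomorphism of $G$, so $\rho(g)$ sends $C^\infty(G,{\mathbb R})$ into itself, and the assignment $f\mapsto L_{g^{-1}}^*f$ is obviously linear in $f$. The presence of the inverse on $g$ is precisely what converts the contravariant pullback operation into a genuine left action; without it one would obtain an antihomomorphism.

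Next I will identify the subspace in question. The action $\widehat\Phi^R_1$ is the cotangent lift of the right translation action $\Phi^R|_{G_1}$ of $G_1$ on $G$, so the image under $\pi_G$ of the $\widehat\Phi^R_1$-orbit through any $\zeta\in T^*G$ is the right coset $\pi_G(\zeta)\,G_1$. A function $f\in C^\infty(G,{\mathbb R})$ therefore has constant restriction to the $\pi_G$-projection of each such orbit if and only if $f(xh)=f(x)$ for all $x\in G$ and $h\in G_1$, that is, if and only if $f$ is right $G_1$-invariant (equivalently, $f=f_0\circ\varpi$ for some $f_0\in C^\infty(G/G_1,{\mathbb R})$).

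Finally I will check that this subspace is preserved by every $\rho(g)$. Given a right $G_1$-invariant $f$, any $x\in G$, and any $h\in G_1$, the fact that left and right translations on $G$ commute yields
 $$\bigl(L_{g^{-1}}^*f\bigr)(xh)=f(g^{-1}xh)=f(g^{-1}x)=\bigl(L_{g^{-1}}^*f\bigr)(x),$$
showing that $L_{g^{-1}}^*f$ is again right $G_1$-invariant. The only point that could cause confusion is bookkeeping the composition order and the inverse on $g$; once these are handled correctly, the two verifications are entirely routine and there is no substantive obstacle.
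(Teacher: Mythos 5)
Your proof is correct and rests on the same fact as the paper's one-line argument, namely that left and right translations on $G$ commute (which is why the lifted actions $\widehat\Phi^L$ and $\widehat\Phi^R$ commute); you simply write out the routine verifications, including the correct identification of the subspace as the right $G_1$-invariant functions, i.e.\ those of the form $f_0\circ\varpi$.
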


\begin{proof}
This is an immediate consequence of the fact that the actions $\widehat\Phi^L$ and $\widehat\Phi^R$ commute.
\end{proof}

\begin{rmk}\label{assumption}{\rm
The vector space $C^\infty(G/G_1,{\mathbb R})$ can be considered as an infinite-di\-men\-sio\-nal Abelian Lie group.
Lemmas \ref{stat11} and \ref{stat12} show that $G\times C^\infty(G/G_1,{\mathbb R})$ can be equipped with the structure of a semi-direct product of groups and that it acts on the symplectic manifold
$(T^*G,\omega_{T^*G})$ by a Hamiltonian action. The map defined on $T^*G$ with values in the product of 
${\frak g}^*$ with the space of distributions on $G/G_1$ (in the sense of Laurent Schwartz, \emph{i.e.}, the 
dual of $C^\infty(G/G_1,{\mathbb R})$)
 $$\zeta\mapsto\bigl(J^L(\zeta),\delta_{\varpi\circ\pi_G(\zeta)}\bigr)\,,$$
where $\delta_{\varpi\circ\pi_G(\zeta)}$ is the Dirac distribution at 
$\varpi\circ\pi_G(\zeta)$, can be considered as a momentum map (in a generalized sense) of that action.
This explains why a symmetry break in $T^*G$ often causes the appearance of semi-direct product of groups.
In the next Proposition, we assume that there exists a finite-dimensional vector subspace $V$ of $C^\infty(G/G_1,{\mathbb R})$ which is mapped onto itself by the linear representation of $G$ described in Lemma \ref{stat12} and separates points (\emph{i.e.}, which is such that for any pair of distinct points in $G/G_1$, there exists at least one function which belongs to that space and takes different values at these points). 
}
\end{rmk}

\begin{proposition}\label{stat13}
We assume that there exists a finite-dimensional vector subspace $V$ of $C^\infty(G/G_1,{\mathbb R})$ 
which is mapped onto itself by the linear representation of $G$ described in Lemma \ref{stat12} 
and separates points (in the sense explained in Remark \ref{assumption}). Then there exists a
Hamiltonian action of the semi-direct product $G\times V$ which extends the action 
$\widehat \Phi^L$ of $G$ on $T^*G$. The momentum map $(J^L,K)$ of that action, which takes its values in
${\frak g}\times V^*$, has as first component the momentum map $J^L$ of the action $\widehat \Phi^L$. Its second
component $K:T^*G\to V^*$ is given by
 $$\bigl\langle K(\zeta),f\bigr\rangle=f\bigl(\varpi\circ\pi_G(\zeta)\bigr)\,,
\quad \zeta\in T^*G\,,\quad f\in V\,.$$
Moreover, $(J^L,K)$ is constant on each orbit of the action $\widehat \Phi_1^R$ and the Hamiltonian $H$ is constant on each level set of $(J^L,K)$. If a smooth function $h:{\frak g}^*\times V^*\to{\mathbb R}$ is such that
$H=h\circ(J^L,K)$, $(J^L,K)$ maps each integral curve of the Hamiltonian vector field ${\mathcal X}_H$ on the symplectic manifold $(T^*G,\omega_{T^*G})$ onto an integral curve of the Hamiltonian vector field 
${\mathcal X}_h$ on the Poisson manifold ${\frak g}\times V^*$.  
\end{proposition}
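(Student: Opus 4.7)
The plan is to construct the extended action by combining the cotangent lift $\widehat\Phi^L$ with the Hamiltonian flow exhibited in Lemma \ref{stat11}, and then check Hamiltonicity, momentum-map identity, invariance, and the Poisson-map property in turn.

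First I would build the action. For each $f\in V$, let $\Theta_f:T^*G\to T^*G$ be the time-one map of the Hamiltonian flow of $f\circ\varpi\circ\pi_G$; by Lemma~\ref{stat11}, $\Theta_f(\zeta)=\zeta-\d(f\circ\varpi)\bigl(\pi_G(\zeta)\bigr)$. Because this perturbation is vertical and linear in $f$, one has $\Theta_{f_1}\circ\Theta_{f_2}=\Theta_{f_1+f_2}$, so $f\mapsto\Theta_f$ is a symplectic action of the additive group $V$. Define
$$\Xi_{(g,f)}(\zeta)=\Theta_f\circ\widehat\Phi^L_g(\zeta).$$
A short cotangent-lift computation, using $\varpi\circ L_g=\lambda_g\circ\varpi$ where $\lambda_g$ is the induced left action of $G$ on $G/G_1$, yields the intertwining relation $\widehat\Phi^L_g\circ\Theta_f=\Theta_{g\cdot f}\circ\widehat\Phi^L_g$, in which $g\cdot f:=L_{g^{-1}}^*f$ is precisely the representation of Lemma~\ref{stat12} restricted to the $G$-invariant subspace $V$. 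Consequently $\Xi_{(g_1,f_1)}\circ\Xi_{(g_2,f_2)}=\Xi_{(g_1g_2,\,f_1+g_1\cdot f_2)}$, identifying the group on $G\times V$ as the semi-direct product $G\ltimes V$ and exhibiting $\Xi$ as a left action extending $\widehat\Phi^L$.

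Next I would verify the momentum map. The fundamental vector field on $T^*G$ of $(X,0)\in{\frak g}\oplus V$ coincides with that of $\widehat\Phi^L$, whose Hamiltonian is $\langle J^L,X\rangle$ since $J^L$ is the momentum map of $\widehat\Phi^L$. The fundamental vector field of $(0,f)$ is, by construction, the Hamiltonian vector field of $f\circ\varpi\circ\pi_G=\langle K,f\rangle$. Adding, the fundamental vector field of $(X,f)$ has Hamiltonian $\langle J^L,X\rangle+\langle K,f\rangle$, so $(J^L,K)$ is a momentum map. Invariance of $(J^L,K)$ on $\widehat\Phi^R_1$-orbits follows from two facts: $J^L$ is constant on every $\widehat\Phi^R$-orbit, and for $g\in G_1$ one has $\varpi\bigl(\pi_G(\widehat\Phi^R_g(\zeta))\bigr)=\varpi\bigl(\pi_G(\zeta)g\bigr)=\varpi\bigl(\pi_G(\zeta)\bigr)$, so $K$ is also constant on such orbits. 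For the invariance of $H$: Proposition~\ref{stat9} gives $H=h'\circ(J^L,\varpi\circ\pi_G)$; if $(J^L,K)(\zeta_1)=(J^L,K)(\zeta_2)$, then $f\bigl(\varpi\circ\pi_G(\zeta_1)\bigr)=f\bigl(\varpi\circ\pi_G(\zeta_2)\bigr)$ for every $f\in V$, and because $V$ separates points of $G/G_1$ the two base projections agree, whence $H(\zeta_1)=H(\zeta_2)$.

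Finally, for the integral-curve statement, I would show that $(J^L,K):T^*G\to{\frak g}^*\times V^*\cong({\frak g}\ltimes V)^*$ is a Poisson map for the Kirillov-Kostant-Souriau structure on the target. It suffices to check infinitesimal equivariance on generating functions: $\{\langle J^L,X_1\rangle,\langle J^L,X_2\rangle\}=\langle J^L,[X_1,X_2]\rangle$ by the $\Ad^*$-equivariance of $J^L$; $\{\langle K,f_1\rangle,\langle K,f_2\rangle\}=0$ because the Hamiltonian vector fields of Lemma~\ref{stat11} are vertical while the Hamiltonians depend only on the base; and the mixed bracket $\{\langle J^L,X\rangle,\langle K,f\rangle\}$, computed as the Lie derivative of $\langle K,f\rangle$ along the fundamental vector field of $\widehat\Phi^L$ for $X$, equals $\langle K,X\cdot f\rangle$, matching the semi-direct-product Lie bracket. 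Given $H=h\circ(J^L,K)$, the Poisson property then forces $(J^L,K)$ to send each integral curve of ${\mathcal X}_H$ to an integral curve of ${\mathcal X}_h$.

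The main obstacle is the bookkeeping in the first paragraph: getting the side conventions and the intertwining identity $\widehat\Phi^L_g\circ\Theta_f=\Theta_{g\cdot f}\circ\widehat\Phi^L_g$ right so that the associativity calculation produces the expected semi-direct-product multiplication on $G\times V$. Everything else is routine once the action is correctly set up.
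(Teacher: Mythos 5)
Your proposal is correct and follows essentially the same route as the paper: the extended action is obtained by combining the vertical Hamiltonian flows of Lemma~\ref{stat11} with $\widehat\Phi^L$ via the representation of Lemma~\ref{stat12}, the momentum map is read off from the generating Hamiltonians, and the remaining assertions follow from the separation of points and the Poisson-map property of $(J^L,K)$. The only difference is that you construct the $G\ltimes V$ action directly and verify the intertwining relation and brackets explicitly, whereas the paper obtains it by restricting the infinite-dimensional semi-direct product of Remark~\ref{assumption} to $V$ and leaves those verifications implicit.
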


\begin{proof}
The assumption made shows that the semi-direct product of groups structure which, by Lemma \ref{stat12} and Remark \ref{assumption}, exists on $G\times C^\infty(G/G_1,{\mathbb R})$, as well as its Hamiltonian action on $T^*G$,
yield by restriction a semi-direct product of groups structure on $G\times V$ and a Hamiltonian action of that group on $T^*G$. The expression of the momentum map $(J^L,K)$ follows from that of generalized momentum map
of the action of $C^\infty(M,{\mathbb R})$ given in Remark \ref{assumption}. The other assertions come from the facts that $V$ separates points and that $(J^L,K)$ is a Poisson map. 
\end{proof}

\begin{rmk}{\rm
The map $(J^L,K):T^*G\to {\frak g}\times V^*$ may not be surjective. Therefore, the smooth function
$h:{\frak g}\times V^*\to{\mathbb R}$ such that $H=h\circ(J^L,K)$ may not be unique.
}
\end{rmk}
\par\smallskip\noindent

{\bf Example}\quad
The motion of a rigid body with a fixed point considered in Subsection \ref{rigidbody} is a system which satisfies the assumption of Proposition \ref{stat13}. For each configuration of the rigid body, the center of mass of the body lies on a sphere embedded in the physical space $E$, centered on the fixed point. That sphere realizes a natural embedding of the homogeneous space $G/G_1$ into $E$. The $3$-dimensional vector space $V$ of functions 
on $G/G_1$ is the vector space of linear functions on the physical space $E$ composed with that natural embedding.

\section*{Acknowledgements}
\addcontentsline{toc}{section}{Acknowledgements}
The author thanks his colleagues and friends Maylis Irigoyen, Alain Albouy, Marc Chaperon, Alain Chenciner, Laurent Lazzarini, Claude Vall\'ee and G\'ery de Saxc\'e for helpful discussions. He thanks Alan Weinstein 
for his encouragements and his interest in this work. Although retired he received
the material and moral support of his former Institutions, the \emph{Institut de Math\'ematiques de Jussieu} and the \emph{Universit\'e Pierre et Marie Curie}.

Charles-Michel Marle \\
Institut de Math\'ematiques de Jussieu\\
Universit\'e Pierre et Marie Curie\\
Paris, France\\
{\it E-mail address}: {\tt charles-michel.marle@polytechnique.org}\\[0.3cm]

\label{last}
\end{document}